\newcommand{\prob}{\mathbb P}
\newcommand{\E}{\mathbb E}
\newtheorem{theorem}{Theorem}[section]
\newtheorem{proposition}{Proposition}[section]
\newtheorem{remark}{Remark}[section]
\newcommand{\W}{W^{(q+\lambda)}}
\numberwithin{equation}{section}
\begin{document}
	
	

\title{A dual risk model with additive and proportional gains:
\\
ruin probability and dividends}

\author{Onno Boxma\thanks{Department of Mathematics and Computer Science, Eindhoven
	University of Technology, P.O. Box 513, 5600 MB Eindhoven, The
		Netherlands (o.j.boxma@tue.nl)\\
The research of Onno Boxma was supported via a TOP-C1 grant of the Netherlands Organisation for Scientific Research.}
	\hspace{2mm},
	 Esther Frostig\thanks{Department of Statistics, Haifa University, Haifa, Israel (frostig@stat.haifa.ac.il)
\\
The research of Esther Frostig was supported by the Israel Science Foundation, grant no. 1999/18.}
	\hspace{2mm}and
	 Zbigniew Palmowski\thanks{Department of Applied Mathematics, Wroclaw University of Science and Technology, Wroclaw, Poland (zbigniew.palmowski@gmail.com).\\
The research of Zbigniew Palmowski is partially supported by Polish National Science Centre Grant
No. 2016/23/B/HS4/00566 (2017-2020).}
}

		\maketitle
	\begin{abstract}
We consider a dual risk model with constant expense rate and i.i.d. exponentially distributed gains $C_i$ ($i=1,2,\dots$)
that arrive according to a renewal process with general interarrival times. We add to this classical dual risk model the proportional
gain feature, that is, if the surplus process just before the $i$th arrival is at level $u$, then
for $a>0$ the capital jumps up to the level $(1+a)u+C_i$.
The ruin probability and the distribution of the time to ruin are determined.
We furthermore identify the value of discounted cumulative dividend payments, for the case of a Poisson arrival process of proportional gains.
In the dividend calculations, we also consider a random perturbation of our basic risk process modeled by an
independent Brownian motion with drift.
\\
{\em Keywords: dual risk model, ruin probability, time to ruin, dividend}
\end{abstract}
\section{Introduction}

We consider a dual risk model with constant expense rate normalized at $1$.
Gains arrive according to a renewal process $\{N(t), t \geq 0\}$
with i.i.d. interarrival times $T_{i+1}-T_i$ having distribution $F(\cdot)$, density $f(\cdot)$ and  Laplace-Stieltjes transform (LST)
$\phi(\cdot)$.
If the surplus process just before the $i$th arrival is at level $u$, then the capital jumps up to the level $(1+a)u+C_i$,
$i=1,2,\dots$,
where $a>0$ and $C_1,C_2,\dots$  are i.i.d.\ exponentially distributed random variables with mean $1/\mu$.
Let $U(t)$ be the surplus process, with  $U(0)=x>0$, then we can write
\begin{equation}\label{riskprocess}
U(t)=x-t+ \sum_{i=1}^{N(t)} (C_i+aU(T_i-)), ~~~t \geq 0. \end{equation}
Taking $a=0$ yields a classical dual risk model, while $C_i \equiv 0$ yields a dual risk model with proportional gains.
$U(t)$ can also represent the workload in an M/G/1 queue
or the inventory level in a storage model or dam model
with a constant demand rate
and occasional inflow
that depends proportionally (apart from independent upward jumps)
on the current amount of work in the system.
We also give some results on a generalization of the model of \eqref{riskprocess} where
at the $i$th jump epoch the jump has size $au+C_i$ with probability $p$,
and has size $D_i$ with probability $1-p$, where $D_1,D_2,\dots$ are independent, exp($\delta$) distributed random variables, independent of $C_1,C_2,\dots$.

In this paper we are interested (i) in exactly identifying the Laplace transforms of the ruin probability and the ruin time, and (ii) in
approximating the value function, being the cumulative discounted amount of dividends paid up to the ruin time under a fixed barrier strategy.
To find this value function we solve a two-sided exit problem for the risk process \eqref{riskprocess}, which seems to be interesting in itself.
In the discounted dividend case we also add to the risk process \eqref{riskprocess} a perturbation modeled by a Brownian motion $X(t)$ with drift, that is,
we replace the negative drift $-t$ by process $X(t)$.

More formally, we start from the analysis of the ruin probability
\begin{equation}\label{ruinprob}
R(x):=\prob_x(\tau_x<\infty),
\end{equation}
where $\prob_x(\cdot):=\prob(\cdot|U(0)=x)$ and the ruin time is defined as the first time the surplus process equals zero:
\begin{equation}\label{taux}\tau_x = \inf\{t\geq 0: U(t)= 0\}.\end{equation}
Our method of analyzing $R(x)$ is based on a one-step analysis where the process under consideration is viewed at successive claim times.
We obtain the Laplace transform (with respect to initial capital) of the ruin probability for the risk process \eqref{riskprocess}.
We also analyze the double Laplace transform of the ruin time (with respect to initial capital and time).

Another quantity of interest for insurance companies is the expected cumulative and discounted amount of dividend payments
calculated under a barrier strategy.
To approach the dividend problem for the barrier strategy with barrier $b$, we consider the controlled surplus process $U^b$
satisfying
\begin{equation}\label{regproc}
U^b(t)=x-t+ \sum_{i=1}^{N(t)} (C_i+aU^b(T_i-))-L^b(t),
\end{equation}
where
the cumulative amount of dividends $L^b(t)$ paid up to time $t$ comes from paying all the overflow above a fixed level $b$
as dividends to shareholders.
The object of interest is the average value of the cumulative discounted dividends paid up to the ruin time:
\begin{equation}\label{vpi}
v(x):=\E_x\left[\int_0^{\tau_x^b}{\rm e}^{-qt}{\rm d}L^b(t) \right],
\end{equation}
where $\tau_x^b:=\inf\{t\geq 0: U^b(t)= 0\}$ is the ruin time and $q\geq 0$ is a given discount rate.
Here we adopt the convention that $\E_x$ is the expectation with respect to $\prob_x$.
We derive a differential-delay equation for $v(x)$. However, such differential-delay equations are notoriously difficult to solve, and
we were not able to solve our equation. Hence we have developed the following approach.
Under the additional assumption that $\{N(t), t \geq 0\}$ is a Poisson process and all
$C_i$ equal zero, we shall find
the expected cumulative discounted dividends
\begin{equation}\label{vN}
v_{N}(x):= \E_x\left[\int_0^{\tau_x^b(N)}{\rm e}^{-qt}{\rm d}L^b(t) \right] ,
\end{equation}
paid under the barrier strategy until reaching $\frac{b}{(a+1)^{N}}$, that is, up to
$\tau_x^b(N):=\inf\{t \geq 0: U^b(t)=\frac{b}{(a+1)^N}\}$.
By taking $N$ sufficiently large we can approximate the value function $v(x)$ closely by $v_N(x)$.
To find $v_N(x)$  we
first develop a method to solve a two-sided exit problem.
Defining
$d_{n}$  as the first time that $U^b$ reaches (down-crosses) $\frac{b}{(a+1)^{n}}$ and $u_{n}$ as the first time $U^b$ up-crosses $\frac{b}{(a+1)^{n}}$,
we determine (with $1_{\cdot}$ denoting an indicator function)
\begin{equation}\label{rhoN}
\rho_{N}(x):=\E_{x}\left[ e^{-qd_{N}}1_{d_{N}<u_{ 0}}\right] ,
\end{equation}
which seems of interest in its own.
We then use a very similar method to find $v_N(x)$, and to also solve a second two-sided exit problem, determining
\begin{equation}\label{muN}
\mu_{N}(x):=\E_{x}[e^{-qu_{0}}1_{u_{0}<d_{N}}].\end{equation}
In Section~\ref{sec:Brownian} we also perform a
similar analysis for the risk process \eqref{riskprocess} perturbed by
an independent Brownian motion.
There we also use the fluctuation theory
of spectrally negative L\'evy processes, expressing the exit identities in terms of
so-called scale functions, as presented for example in Kyprianou \cite{kyprianou2006}.

The dual risk model has been in the focus of actuarial science for some time.
In this model a company which continuously pays expenses, relevant to research or labour and operational costs,
occasionally gains some random income from selling a product or some inventions or discoveries \cite{avanzi2007, avanzi2009, bayraktar2012,Ng, yinwen2013,yinwen2014}.
As an example one can consider pharmaceutical or petroleum companies, R\&D companies, real estate agent offices or brokerage firms that sell mutual funds or insurance products with a front-end load.
For more detailed information, we refer the reader to \cite{avanzi2008}.
Lately, budgets of many start-ups or e-companies have shown a different feature.
Namely, their gains are not additive but strongly depend on the amount of investments, which usually are so huge that they are proportional
to the value of the company. Then the arrival gain is proportional not only to the investments but also to the value of the company.
Maybe the most transparent case is the example of CD projekt, one of the biggest Polish companies producing computer games.
Issuing new editions of its most famous game 'Witcher' produces jumps in the value of the company (which
is translated into jumps of asset value) and these jumps are proportional to the prior jump position of the value process;
see Figure 1.

\begin{figure}[htbp]
\begin{center}
\begin{minipage}{1.0\textwidth}
\centering
\begin{tabular}{cc}
\includegraphics[scale=0.5]{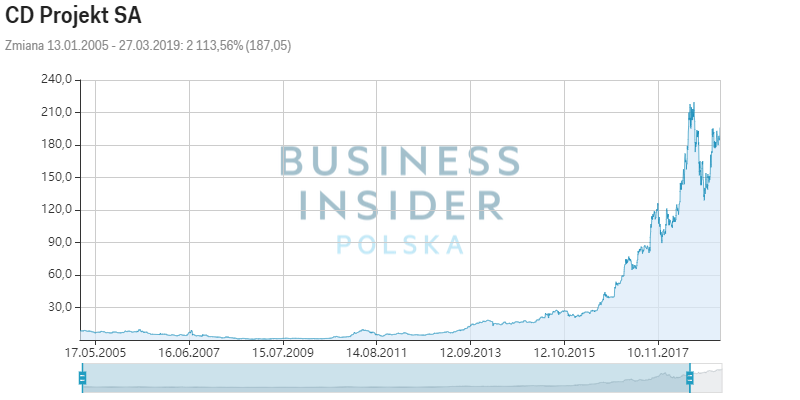}
 \end{tabular}
 \caption{Cd Projekt asset value.}
\end{minipage}
\label{cd}
\end{center}
\end{figure}

\noindent
{\bf Related literature}.
Not many papers consider
the ruin probability for the classical dual risk process (without proportional gain mechanism),
but it corresponds to the first busy period
in a single server queue with initial workload $x$ and as such we can refer to
\cite{Cohen, Prahbu}. If the interarrival time has an exponential distribution
then one can apply fluctuation theory of L\'evy processes to identify the Laplace transform
of the ruin time as well, see e.g. Kyprianou \cite{kyprianou2006}.
Albrecher et al.\ \cite{Albrecher} study the ruin probability in the dual risk model under a
loss-carry forward tax system and assuming exponentially distributed jump sizes.
Palmowski et al. \cite{Paletal} focus on a discrete-time set-up and study the finite-time ruin probability.
In terms of analysis technique, the approach in Sections~\ref{sec:ruin prob} and \ref{sec:timetoruin} bears similarities to the approach used in
\cite{BMR,BLM,BLMP,Vlasiou} to study
Lindley-type recursions $W_{n+1} = {\rm max}(0,a W_n+X_n)$,
where $a=1$ in the classical setting of a single server queue with $W_n$ the waiting time of the $n$th customer.

There is a good deal of work on dividend barriers in the dual model.
All of those papers assume that the cost function is constant, and gains are modeled by a compound Poisson process.
Avanzi et al. \cite{avanzi2007} consider cases where profits or gains follow an exponential distribution or a mixture of exponential distributions and they derive
explicit formulas for the expected discounted dividend value; see also Afonso et al. \cite{Afonso}. Avanzi and Gerber \cite{avanzi2008}
use the Laplace transform method
to study a dual model perturbed by a diffusion.
Bayraktar et al. \cite{bayraktar2012} and Avanzi et al. \cite{avanzi2016} employ fluctuation theory to prove the optimality of a barrier strategy for all spectrally positive L\'{e}vy processes
and express the value function in terms of scale functions.
Yin et al. \cite{yinwen2013, yinwen2014} consider terminal costs and
dividends that are paid continuously at a constant rate (that might be bounded from above) when the surplus is above that barrier; see also Ng \cite{Ng} for similar considerations.
Albrecher et al. \cite{Albrecher} examine a dual risk model in the presence of tax payments.
Marciniak and Palmowski \cite{MarPal} consider a more general dual risk process
where the rate of the costs depends on the present amount of reserves.
Boxma and Frostig \cite{BF} consider the time to ruin and the expected discounted dividends for a different  dividend policy, where a certain part of the gain is paid as dividends  if upon arrival the  gain finds the surplus above a barrier $b$ or if it  would bring the surplus above that level.
\\

\noindent
{\bf Organization of the paper}.
Section~\ref{sec:ruin prob}
is devoted to the determination of the ruin probability,
while Section~\ref{sec:timetoruin} considers the law of the ruin time.
Section~\ref{sec5} considers two-sided exit problems that allow one to find the ruin probability and
the total discounted dividend payments for the special case that the only capital growth is proportional growth.
In Section \ref{sec:Brownian} we handle the Brownian perturbation of the risk process
\eqref{riskprocess}.
Section~\ref{suggest} contains suggestions for further research.

\section{The ruin probability}\label{sec:ruin prob}

In this section we determine the Laplace transform of the ruin probability $R(x)$ when starting in $x$, as defined in \eqref{ruinprob}.
By distinguishing the two cases in which no jump up occurs before $x$ (hence ruin occurs at time $x$)
and in which a jump up occurs at some time $t \in (0,x)$, we can write:
\begin{equation}
R(x) = 1-F(x) + \int_{t=0}^x \int_{y=0}^{\infty} R((1+a)(x-t)+y) \mu {\rm e}^{-\mu y} {\rm d}y {\rm d}F(t) .
\label{eqstart}
\end{equation}
Introducing the Laplace transform \begin{equation}\label{LTruin}\rho(s) := \int_{x=0}^{\infty} {\rm e}^{-sx} R(x) {\rm d}x,
\end{equation} we have
\begin{equation}
\rho(s) = \frac{1-\phi(s)}{s}
+ \int_{x=0}^{\infty} {\rm e}^{-sx} \int_{t=0}^{x} \int_{z=(1+a)(x-t)}^{\infty} R(z) \mu {\rm e}^{-\mu z}
{\rm e}^{\mu(1+a)(x-t)} {\rm d}z  {\rm d}F(t) {\rm d}x .
\label{eq1}
\end{equation}
The triple integral in the righthand side of (\ref{eq1}), $I(s)$, can be rewritten as follows.
\begin{eqnarray}
I(s) &=& \int_{t=0}^{\infty} {\rm e}^{-st}
\int_{x=t}^{\infty} {\rm e}^{-s(x-t)} {\rm e}^{\mu(1+a)(x-t)}
\int_{z=(1+a)(x-t)}^{\infty} \mu {\rm e}^{-\mu z} R(z)
{\rm d}z
{\rm d}x
{\rm d}F(t)
\nonumber
\\
&=& \phi(s) \int_{v=0}^{\infty} {\rm e}^{-sv + \mu(1+a)v} \int_{z=(1+a)v}^{\infty}
\mu {\rm e}^{-\mu z} R(z) {\rm d}z {\rm d} v
\nonumber
\\
&=& \phi(s) \int_{z=0}^{\infty} \mu {\rm e}^{-\mu z} R(z)
\frac{{\rm e}^{(\mu(1+a)-s) \frac{z}{1+a}}-1}{\mu(1+a)-s} {\rm d} z .
\end{eqnarray}
Hence
\begin{equation}
\rho(s) = \frac{1-\phi(s)}{s} + \phi(s) \frac{\mu}{\mu(1+a)-s} \left[\rho(\frac{s}{1+a})-\rho(\mu)\right] .
\label{eq2}
\end{equation}
Introducing
\begin{equation}\label{HJ}
H(s) := \frac{1-\phi(s)}{s} - \phi(s) \frac{\mu}{\mu(1+a)-s} \rho(\mu), ~~~ J(s) := \phi(s) \frac{\mu}{\mu(1+a)-s},
\end{equation}
we rewrite (\ref{eq2}) into
\begin{equation}
\rho(s) = J(s) \rho\left(\frac{s}{1+a}\right) + H(s) .
\end{equation}
Thus $\rho(s)$ is expressed into $\rho\left(\frac{s}{1+a}\right)$,
and after $N-1$ iterations this results in
\begin{equation}
\rho(s) = \sum_{k=0}^{N-1} \prod_{j=0}^{k-1} J\left(\frac{s}{(1+a)^j}\right) H\left(\frac{s}{(1+a)^k}\right) +
\rho\left(\frac{s}{(1+a)^N}\right)
\prod_{j=0}^{N-1} J\left(\frac{s}{(1+a)^{j}}\right) ,
\label{iter}
\end{equation}
an empty product being equal to $1$.
Observe that, for large $k$, $H\left(\frac{s}{(1+a)^k}\right)$ approaches some constant
and $J\left(\frac{s}{(1+a)^k}\right)$ approaches $\frac{\phi(0) }{1+a} = \frac{1}{1+a} < 1$.
Hence the
$\sum_{k=0}^{N-1} \prod_{j=0}^{k-1}$ term in (\ref{iter}) converges geometrically fast, and we obtain
\begin{equation}
\rho(s) = \sum_{k=0}^{\infty} \prod_{j=0}^{k-1} J\left(\frac{s}{(1+a)^j}\right) H\left(\frac{s}{(1+a)^k}\right) .
\label{iter2}
\end{equation}
$\rho(\mu)$, featuring in the expression for $H(s)$, is still unknown. Taking $s=\mu$ in (\ref{iter2}) gives
\begin{equation}
\rho(\mu) = \sum_{k=0}^{\infty} \left(
\prod_{j=0}^{k-1} J\left(\frac{\mu}{(1+a)^j}\right) \right)
\left[ \frac{1 - \phi(\frac{\mu}{(1+a)^k})}{\frac{\mu}{(1+a)^k}} -
\phi\left(\frac{\mu}{(1+a)^k}\right) \frac{\mu}{\mu(1+a) - \frac{\mu}{(1+a)^k}} \rho(\mu) \right] ,
\label{rhomu}
\end{equation}
and hence
\begin{equation}\label{murho}
\rho(\mu) =
\frac{
\sum_{k=0}^{\infty} \big (
\prod_{j=0}^{k-1} J(\frac{\mu}{(1+a)^j}) \big )
\frac{1 - \phi(\frac{\mu}{(1+a)^k})}{\frac{\mu}{(1+a)^k}}
}
{1+
\sum_{k=0}^{\infty} \big (
\prod_{j=0}^{k-1} J(\frac{\mu}{(1+a)^j}) \big )
\phi(\frac{\mu}{(1+a)^k}) \frac{(1+a)^k}{(1+a)^{k+1} - 1}
}
.
\end{equation}
We can sum up
our analysis in the following first main result.
\begin{theorem}
The Laplace transform of the ruin probability,
$\rho(s) = \int_0^{\infty} {\rm e}^{-sx} \prob_x(\tau_x < \infty) {\rm d}x$,
is given in \eqref{iter2} with $H$ and $J$ given in \eqref{HJ},
where $\rho(\mu)$ is identified in \eqref{murho}.
\end{theorem}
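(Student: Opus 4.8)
The plan is to reduce the whole computation to a single renewal-type integral equation for $R(x)$ and then to a functional equation for its Laplace transform. First I would condition on the epoch and mark of the first gain. Starting from level $x$, either no gain arrives in $(0,x)$, in which case the deterministic downward drift brings the surplus to $0$ and ruin occurs (this contributes the term $1-F(x)$), or the first gain arrives at some $t\in(0,x)$, at which instant the surplus equals $x-t$ and jumps to $(1+a)(x-t)+y$ with $y$ exponentially distributed. By the strong Markov property at $T_1$ and the fact that the post-jump process is a fresh copy of \eqref{riskprocess} started from the new level, the ruin probability restarts as $R\bigl((1+a)(x-t)+y\bigr)$; integrating over $t$ and $y$ gives \eqref{eqstart}. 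The point requiring care here is the proportional term $(1+a)(x-t)$, which is the only feature distinguishing the model from the classical dual risk process.

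Next I would take the Laplace transform \eqref{LTruin} of \eqref{eqstart}. The bounded kernel ($0\le R\le 1$) and integrability of the exponential density justify interchanging the order of integration in the triple integral $I(s)$ by Fubini/Tonelli; carrying out the inner $z$-integration explicitly and substituting $v=x-t$ factors out $\phi(s)$ and produces, after recognizing $\rho(s/(1+a))$ and $\rho(\mu)$, the self-referential identity \eqref{eq2}. Packaging the $s$-dependence into $H$ and $J$ from \eqref{HJ} then recasts \eqref{eq2} as the functional equation $\rho(s)=J(s)\rho\bigl(s/(1+a)\bigr)+H(s)$, which relates $\rho$ at $s$ to its value at the contracted argument $s/(1+a)$.

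The core step is to iterate this functional equation. Iterating $N-1$ times gives \eqref{iter}, and I would then let $N\to\infty$. I expect this limit to be the main obstacle, as it splits into two claims: convergence of the series $\sum_{k\ge 0}\bigl(\prod_{j=0}^{k-1}J\bigr)H$ and vanishing of the remainder $\rho\bigl(s/(1+a)^N\bigr)\prod_{j=0}^{N-1}J\bigl(s/(1+a)^j\bigr)$. For the series one uses that $J\bigl(s/(1+a)^k\bigr)\to\phi(0)/(1+a)=1/(1+a)<1$ and that $H\bigl(s/(1+a)^k\bigr)$ tends to a finite limit as $k\to\infty$, so the telescoping products decay geometrically and the sum converges to \eqref{iter2}; making this precise needs only a uniform bound on $J$ along the contracted arguments together with continuity of $H$ near $0$. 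The delicate part is the remainder: since $R\le 1$ only gives $\rho\bigl(s/(1+a)^N\bigr)\le (1+a)^N/s$, which exactly cancels the geometric decay of the product, the bound $R\le 1$ alone does not suffice. I would instead establish that $R$ is integrable, i.e. $\rho(0^+)<\infty$ (equivalently that $R(x)\to 0$ sufficiently fast, which holds in the drift regime where ruin is not certain because of the strong proportional upward jumps); then $\rho\bigl(s/(1+a)^N\bigr)\to\rho(0^+)$ is bounded while the product tends to $0$, forcing the remainder to vanish and validating \eqref{iter2}.

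Finally I would pin down the constant $\rho(\mu)$, which still sits inside $H$. Setting $s=\mu$ in \eqref{iter2} and inserting the definition of $H$ yields the single scalar linear equation \eqref{rhomu} for $\rho(\mu)$; solving it gives the closed form \eqref{murho}. Substituting this value back into $H$ makes \eqref{iter2} fully explicit, which is precisely the assertion of the theorem.
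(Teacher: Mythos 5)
Your proposal follows essentially the same route as the paper: the same one-step decomposition giving \eqref{eqstart}, the same Laplace-transform manipulation leading to \eqref{eq2}, the same iteration of $\rho(s)=J(s)\rho(s/(1+a))+H(s)$, and the same substitution $s=\mu$ to pin down $\rho(\mu)$. The one place you go beyond the paper is the remainder term in \eqref{iter}: the paper only argues geometric convergence of the series and silently drops $\rho(s/(1+a)^N)\prod_{j}J(s/(1+a)^j)$, whereas you correctly observe that the bound $R\le 1$ is not enough and that one needs $\int_0^\infty R(x)\,{\rm d}x<\infty$ (which does hold here when the interarrival times have finite mean, since the multiplicative growth makes $R(x)$ decay integrably), so your version is in fact the more careful one.
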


{\bf Remark 1.}
It should be noticed that $R(x) \equiv 1$ satisfies Equation (\ref{eqstart})
but this trivial solution is not always the ruin probability.
In fact, defining $X_n := U(T_n-)$, the surplus just before the $n$th jump epoch,
the discrete Markov chain $\{X_n, n \geq 1\}$
satisfies the affine recursion
$$X_{n}=(a+1)X_{n-1} +(C_n-(T_n-T_{n-1})).$$
If $a>0$ then from \cite[Thm. 2.1.3, p. 13]{Dareketal}
we have that with a strictly positive probability $X_n$ tends to $+\infty$.
Thus $R(x)<1$ if $a>0$.
If $a=0$ then we are facing a $G/M/1$ queue, whose busy period ends with probability one iff $- \phi'(0) \geq \frac{1}{\mu}$.
\\

\noindent
{\bf Remark 2.}
Both $H(s)$ and $J(s)$ have a singularity at $s=\mu (1+a)$, which suggests that
the expression for $\rho(s)$ in (\ref{iter2}) has a singularity for every $s=\mu (1+a)^{j+1}$, $j=0,1,\dots$.
However, $s=\mu(1+a)$ is a removable singularity, as is already suggested by the form of (\ref{eq2}), where $s=\mu(1+a)$ also is a removable singularity.
To verify formally that $s=\mu(1+a)$ is not a singularity of (\ref{iter2}), we proceed as follows (the same procedure can be applied for
$s=\mu (1+a)^{j+1}$, $j=1,2,\dots$).
Isolate the coefficients of the factor $\frac{1}{\mu(1+a)-s}$ in (\ref{iter2}). Their sum $C(s)$ equals:
\begin{equation}
C(s) := -\phi(s) \mu \rho(\mu) + \phi(s) \mu \sum_{k=1}^{\infty} \prod_{j=1}^{k-1} J\left(\frac{s}{(1+a)^j}\right) H\left(\frac{s}{1+a)^k}\right) .
\end{equation}
Introducing $k_1 := k-1$ and $j_1 :=j-1$, and using (\ref{iter2}), it is readily seen that $C(\mu(1+a))=0$.
\\

\noindent
{\bf Remark 3.}
For the case of Poisson arrivals, taking $F(x) = 1 - {\rm e}^{-\lambda x}$,
one gets a specific form for $\rho(s)$, indicating that $R(x)$ is a weighted sum of exponential terms.
\\

\noindent
{\bf Remark 4.}
Finally a remark about possible generalizations.
We could allow a hyperexponential-$K$ distribution for $C$, leading to $K$ unknowns $\rho(\mu_1),\dots,\rho(\mu_K)$
which can be found by taking $s=\mu_1,\dots,s=\mu_K$.
\\
We could also consider the following generalization of the model \eqref{riskprocess} considered so far as well:
when the $i$th jump upwards occurs  while $U(T_i-)=u$, that jump has size $au+C_i$ with probability $p$,
and has size $D_i$ with probability $1-p$, where $D_1,D_2,\dots$ are independent, exp($\delta$) distributed random variables, independent of $C_1,C_2,\dots$.
By taking $p=1$ we get the old model, while $a=0$, $\mu=\delta$ gives a classical dual risk model.
It is readily verified that, for this generalized model, (\ref{eq2})  becomes:
\begin{eqnarray}
\rho(s) &=& \frac{1-\phi(s)}{s} + p \phi(s) \frac{\mu}{\mu(1+a)-s} \left[\rho\left(\frac{s}{1+a}\right)-\rho(\mu)\right]
\nonumber
\\
&+& (1-p) \phi(s) \frac{\delta}{\delta - s} [\rho(s) - \rho(\delta)] .
\label{eq3}
\end{eqnarray}
Introducing
\begin{equation}
H_1(s) := \frac{
\frac{1-\phi(s)}{s} -p \phi(s) \frac{\mu}{\mu(1+a) -s} \rho(\mu) - (1-p) \phi(s) \frac{\delta}{\delta-s} \rho(\delta)}
{1 - (1-p) \frac{\delta}{\delta -s} \phi(s)} ,
\end{equation}
\begin{equation}
J_1(s) := \frac{
p \phi(s) \frac{\mu}{\mu(1+a) -s}}
{1 - (1-p) \frac{\delta}{\delta -s} \phi(s)} ,
\end{equation}
we rewrite (\ref{eq3}) into
\begin{equation}
\rho(s) = J_1(s) \rho\left(\frac{s}{1+a}\right) + H_1(s),
\end{equation}
resulting in
\begin{equation}
\rho(s) = \sum_{k=0}^{\infty} \prod_{j=0}^{k-1} J_1\left(\frac{s}{(1+a)^j}\right) H_1\left(\frac{s}{(1+a)^k}\right) .
\label{iter2a}
\end{equation}
Finally, $\rho(\mu)$ and $\rho(\delta)$ have to be determined. One equation is supplied by substituting $s=\mu$ in (\ref{iter2a}) (just as was done below (\ref{iter2})).
For a second equation we invoke Rouch\'e's theorem, which implies that, for any $p \in (0,1)$, the equation $\delta - s - (1-p) \delta \phi(s) = 0$ has exactly one zero, say $s_1$, in the right-half $s$-plane.
Observing that $\rho(s)$ is analytic in that half-plane, so that $\rho(s_1)$ is finite, it follows from (\ref{eq3}) that
\[
\frac{1-\phi(s_1)}{s_1} + p \phi(s_1) \frac{\mu}{\mu (1+a) - s_1}[\rho(\frac{s_1}{1+a}) - \rho(\mu)] - (1-p) \frac{\delta}{\delta - s_1} \phi(s_1) \rho(\delta) = 0.
\]
While this provides a second equation, it also introduces a third unknown, viz., $\rho(\frac{s_1}{1+a})$.
However, substituting $s = \frac{s_1}{1+a}$ in (\ref{iter2a}) expresses $\rho(\frac{s_1}{1+a})$ into $\rho(\mu)$ and $\rho(\delta)$, thus providing a third equation.

\section{The time to ruin}\label{sec:timetoruin}

In this section we study the distribution of $\tau_x$,
the time to ruin when starting at level $x$, as defined in \eqref{taux}. Following a similar approach as in the previous section,
again distinguishing between the first upward jump occurring before or after $x$,
we can write:
\begin{equation}
\E[{\rm e}^{-\alpha \tau_x}] = {\rm e}^{-\alpha x} (1-F(x)) +
\int_{t=0}^x \int_{y=0}^{\infty} {\rm e}^{-\alpha y}  \E[{\rm e}^{-\alpha \tau_{(1+a)(x-t)+y}}] \mu {\rm e}^{-\mu y} {\rm d}y {\rm d}F(t) .
\label{eq3.1}
\end{equation}
{\bf Remark 5.}
Taking $\alpha = 0$ yields the ruin probability $R(x)$.
In that respect, it would not have been necessary to present a separate analysis of $R(x)$; however, to improve the readability of the paper, we have chosen to demonstrate the analysis technique
first for the easier case of $R(x)$.
\\

\noindent
Introducing the Laplace transform
\begin{equation}\label{doubleLT}
\tau(s,\alpha) := \int_{x=0}^{\infty} {\rm e}^{-sx} \E \left[{\rm e}^{-\alpha \tau_x}\right] {\rm d}x,
\end{equation}
and using very similar calculations as those leading to (\ref{eq2}), we obtain:
\begin{equation}
\tau(s,\alpha) = \frac{1-\phi(s+\alpha)}{s+\alpha} + \phi(s+\alpha) \frac{\mu}{\mu (1+a) -s} \left[\tau\left(\frac{s}{1+a},\alpha\right) - \tau(\mu,\alpha)\right] .
\label{eq3.2}
\end{equation}
Introducing
\begin{equation}
H_1(s,\alpha) := \frac{1-\phi(s+\alpha)}{s+\alpha} - \phi(s+\alpha) \frac{\mu}{\mu(1+a)-s} \tau(\mu,\alpha), ~~~ J_1(s,\alpha) := \phi(s+\alpha) \frac{\mu}{\mu(1+a)-s},
\label{3.4}
\end{equation}
we rewrite (\ref{eq3.2}) into
\begin{equation}
\tau(s,\alpha) = J_1(s,\alpha) \tau \left(\frac{s}{1+a},\alpha\right) + H_1(s,\alpha) ,
\end{equation}
which after $N-1$ iterations yields (an empty product being equal to $1$):
\begin{equation}
\tau(s,\alpha) = \sum_{k=0}^{N-1} \prod_{j=0}^{k-1} J_1 \left(\frac{s}{(1+a)^j},\alpha\right) H_1\left(\frac{s}{(1+a)^k},\alpha\right) +
\tau\left(\frac{s}{(1+a)^N},\alpha\right)
\prod_{j=0}^{N-1} J_1\left(\frac{s}{(1+a)^{j}},\alpha\right) .
\label{iternew}
\end{equation}
Observe that, for large $k$, $H_1\left(\frac{s}{(1+a)^k},\alpha\right)$ approaches some function of $\alpha$
and $J_1\left(\frac{s}{(1+a)^k},\alpha\right)$ approaches $\frac{\phi(\alpha) }{1+a} < 1$.
Hence the
$\sum_{k=0}^{N-1} \prod_{j=0}^{k-1}$ term in (\ref{iternew}) converges geometrically fast, and we obtain
\begin{equation}
\tau(s,\alpha) = \sum_{k=0}^{\infty} \prod_{j=0}^{k-1} J_1\left(\frac{s}{(1+a)^j},\alpha\right) H_1\left(\frac{s}{(1+a)^k},\alpha\right) .
\label{iternew2}
\end{equation}
$\tau(\mu,\alpha)$, featuring in the expression for $H_1(s,\alpha)$, is still unknown. Taking $s=\mu$ in (\ref{iternew2}) gives
\begin{eqnarray}
&&\tau(\mu,\alpha) = \sum_{k=0}^{\infty} \left(
\prod_{j=0}^{k-1} J_1\left(\frac{\mu}{(1+a)^j},\alpha\right) \right)\nonumber\\&&\qquad
\left[ \frac{1 - \phi(\frac{\mu}{(1+a)^k}+\alpha)}{\frac{\mu}{(1+a)^k}+\alpha} -
\phi \left(\frac{\mu}{(1+a)^k}+\alpha\right) \frac{\mu}{\mu(1+a) - \frac{\mu}{(1+a)^k}} \tau(\mu,\alpha) \right] ,
\end{eqnarray}
and hence
\begin{equation}\label{taualpha}
\tau(\mu,\alpha) =
\frac{
\sum_{k=0}^{\infty} \big (
\prod_{j=0}^{k-1} J_1(\frac{\mu}{(1+a)^j},\alpha) \big )
\frac{1 - \phi(\frac{\mu}{(1+a)^k}+\alpha)}{\frac{\mu}{(1+a)^k}+\alpha}
}
{1+
\sum_{k=0}^{\infty} \big (
\prod_{j=0}^{k-1} J_1(\frac{\mu}{(1+a)^j}) \big )
\phi(\frac{\mu}{(1+a)^k}+\alpha) \frac{(1+a)^k}{(1+a)^{k+1} - 1}
}
.
\end{equation}
Thus we have proved the second main result of this paper:
\begin{theorem}
The double Laplace transform (with respect to time and initial conditions)
$\tau(s,\alpha) = \int_{x=0}^{\infty} {\rm e}^{-sx} \E \left[{\rm e}^{-\alpha \tau_x}\right] {\rm d}x$
is given in \eqref{iternew2} with $H_1$ and $J_1$ given in
\eqref{3.4}, with $\tau(\mu,\alpha)$ identified in \eqref{taualpha}.
\end{theorem}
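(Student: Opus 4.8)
\emph{Proof strategy.} The plan is to run the argument of Section~\ref{sec:ruin prob} with the discount parameter $\alpha$ carried through every step, the single new analytic ingredient being that the interarrival transform is now evaluated at $s+\alpha$ rather than at $s$. First I would derive the first-passage renewal equation \eqref{eq3.1} by conditioning on the epoch $T_1$ of the first upward jump and invoking the strong Markov property. If no jump occurs before time $x$, the surplus decreases deterministically and reaches $0$ exactly at time $x$, contributing $e^{-\alpha x}(1-F(x))$; if a jump occurs at $t\in(0,x)$, then just before it the surplus equals $x-t$, the jump carries it to $(1+a)(x-t)+y$ with $y$ exponential of rate $\mu$, and the residual ruin time restarts from that level. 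The elapsed time $t$ is discounted by $e^{-\alpha t}$, and it is exactly this factor that, after transforming, produces $\phi(s+\alpha)$.

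Next I would apply $\int_0^\infty e^{-sx}(\cdot)\,\mathrm dx$ to \eqref{eq3.1}. The free term integrates to $(1-\phi(s+\alpha))/(s+\alpha)$. In the double integral I would substitute $v=x-t$, so that the $t$-integration factors off as $\int_0^\infty e^{-(s+\alpha)t}\,\mathrm dF(t)=\phi(s+\alpha)$, and then repeat verbatim the interchange of integration carried out for $I(s)$ between \eqref{eq1} and \eqref{eq2}, with $z=(1+a)v+y$; this collapses the remaining double integral to $\frac{\mu}{\mu(1+a)-s}[\tau(\frac{s}{1+a},\alpha)-\tau(\mu,\alpha)]$. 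The result is the functional equation \eqref{eq3.2}, which with the abbreviations \eqref{3.4} becomes the one-step relation $\tau(s,\alpha)=J_1(s,\alpha)\tau(\frac{s}{1+a},\alpha)+H_1(s,\alpha)$.

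I would then iterate this relation $N-1$ times to reach \eqref{iternew}, exactly as \eqref{iter} was obtained, and let $N\to\infty$. This limit is the step I expect to be the main obstacle, since one must simultaneously establish geometric convergence of the series and vanishing of the remainder $\tau(\frac{s}{(1+a)^N},\alpha)\prod_{j=0}^{N-1}J_1(\frac{s}{(1+a)^j},\alpha)$. For $\alpha>0$ I would control this using the trivial bound $\tau(s,\alpha)\le 1/s$ (from $\E[e^{-\alpha\tau_x}]\le1$) together with $J_1(\frac{s}{(1+a)^k},\alpha)\to\frac{\phi(\alpha)}{1+a}$: the product behaves like a constant times $(\frac{\phi(\alpha)}{1+a})^N$, which beats the factor $\tau(\frac{s}{(1+a)^N},\alpha)\le\frac{(1+a)^N}{s}$ precisely because $\phi(\alpha)<1$, so that the remainder decays like $\phi(\alpha)^N$. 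The boundary case $\alpha=0$ reduces to the ruin-probability statement and rests instead on integrability of $R$. A further sub-point, exactly analogous to Remark~2, is that the apparent poles at $s=\mu(1+a)^{j+1}$ created by the factors $\mu/(\mu(1+a)-s)$ are removable; I would verify this by the same coefficient-cancellation computation. This yields \eqref{iternew2}.

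Finally, the still-unknown constant $\tau(\mu,\alpha)$ hidden in $H_1$ must be pinned down. Because $\tau(\mu,\alpha)$ enters each term $H_1(\frac{\mu}{(1+a)^k},\alpha)$ linearly, setting $s=\mu$ in \eqref{iternew2} produces a single linear equation for it, whose solution is \eqref{taualpha}; the denominator there is nonzero since every summand is nonnegative (each $\phi$ and each factor $J_1$ is positive for the relevant real arguments) while the leading $1$ keeps it at least $1$. Substituting \eqref{taualpha} back into \eqref{iternew2} gives the asserted representation and completes the proof.
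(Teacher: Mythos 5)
Your proposal follows the paper's own argument essentially step for step: the same one-step renewal equation \eqref{eq3.1}, the same Laplace-transform manipulation yielding the functional equation $\tau(s,\alpha)=J_1(s,\alpha)\,\tau(\tfrac{s}{1+a},\alpha)+H_1(s,\alpha)$, the same iteration and passage to the limit giving \eqref{iternew2}, and the same substitution $s=\mu$ to pin down $\tau(\mu,\alpha)$ via \eqref{taualpha}. The only difference is that you supply a somewhat more explicit justification for the vanishing of the remainder term (via $\tau(s,\alpha)\le 1/s$ and $J_1\to\phi(\alpha)/(1+a)<1$), a point the paper leaves implicit.
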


\section{Exit problems, ruin and barrier dividend value function}
\label{sec5}

In this section we consider the same model as in the previous sections, but with the restriction
that the only growth is a proportional growth occurring according to a Poisson process at rate $\lambda$;
throughout this section we further assume that $C_{i}\equiv 0$.
For this model we solve the two-sided downward exit problem in Subsection~\ref{sec5.1}.
In Subsection~\ref{sec5.2} we subsequently use a similar method to determine
the discounted cumulative dividend payments paid up to the ruin time under the barrier
strategy with barrier $b$ and with a discount rate $q$.
But first we briefly discuss an alternative, more straightforward, approach to determining the discounted cumulative dividend payments,
pointing out why this approach does not work.
We start from the observation that for $x>b$ we have
\begin{equation}\label{upperregion}
v(x)=v(b)+x-b.\end{equation}
We now focus on $x\leq b$.
One-step analysis based on the first arrival epoch gives:
\begin{equation}
v(x)=\int_0^x\lambda {\rm e}^{-(\lambda+q)t} v((x-t)(1+a)){\rm d} t, ~~~0 \leq x<b,
\label{eqeq}
\end{equation}
and by taking $z:=x-t$ and taking the derivative with respect to $x$
we end up with the equation
\begin{equation}
v'(x)+(\lambda+q)v(x)=\lambda v(x(1+a)),\qquad x \leq \frac{b}{1+a}.
\label{num}
\end{equation}
Moreover, from \eqref{upperregion} we have
\begin{equation}
v'(x)+(\lambda+q)v(x)=\lambda (x(1+a)-b+v(b)),\qquad  \frac{b}{1+a} < x  \leq b,
\label{num2}
\end{equation}
which can be easily solved.
Unfortunately, differential-delay equations like \eqref{num} seems hard to solve explicitly; cf.\ \cite{Hale}.
As an alternative, one could try to solve the equation numerically.

Instead, we adopt an approach, distinguishing levels $L_n := \frac{b}{(a+1)^n}$ and assuming that ruin occurs when level $\frac{b}{(a+1)^N}$ is reached for some value of $N$.
When $N$ is large, the expected amount of discounted cumulative dividends closely approximates the expected amount until ruin at zero occurs.
The above choice of levels is very suitable because
each proportional jump upward brings the process from a value in $(L_{n+1},L_{n})$ to a value in $(L_{n},L_{n-1})$.


\subsection{Two-sided downward exit problem and ruin time}
\label{sec5.1}	

Recall that $d_{n}$  is the first time that $U$ reaches (down-crosses) $\frac{b}{(a+1)^{n}}$ and $u_{n}$ is the first time the
risk process up-crosses $\frac{b}{(a+1)^{n}}$. Hence $u_0$ is the time until
level $b$ is first up-crossed, i.e., dividend is being paid.
For an integer $N$ we aim to obtain the Laplace transform
$\rho_{N}(x)=\E_{x}\left[ e^{-qd_{N}}1_{d_{N}<u_{ 0}}\right]$
defined in \eqref{rhoN}.
For large $N$, $\rho_N(x)$ approximates the LST of the time until ruin in the event that no dividend is ever paid.
If $q=0$ then $\rho_N(x)$ approximates the probability that ruin occurs before dividends are paid, that is, before reaching $b$.
If $q=0$ and $N$ and $b$ are both tending to infinity then  $\rho_N(x)$ approximates the ruin probability $R(x)$ defined in
\eqref{ruinprob}.
For $1\leq n \leq N$ let
\begin{equation}
\rho_{n,N}:=\rho_{N}\left(\frac{b}{(a+1)^{n}}\right).
\end{equation}
To simplify notation we denote $\rho_{n}:=\rho
_{n,N}$.
Clearly, $\rho_{N}=1$.
As announced above, we consider levels $$L_n := \frac{b}{(a+1)^n},\quad n=0,1,\dots,N;$$
let
\[\mathcal{N}:=\left\{L_1,\dots,L_N \right\}. \]
To determine $\rho_N(x)$ when $x \in (\frac{b}{(a+1)^n},\frac{b}{(a+1)^{n-1}}] = (L_n,L_{n-1}]$, note that, because $d_n < u_0$, there must be a down-crossing of level $L_{n}$ before level $b$ is up-crossed.
We can now distinguish $n$ different possibilities:
when starting at $x$, the surplus process  first decreases through $L_{n}$, or it first increases via jumps above level $L_{n-j}$ before there is a first down-crossing through that same level $L_{n-j}$, $j=1,\dots,n-1$.
Denoting the time for the former event as
\begin{equation}
T_{n,0} := d_n 1_{d_n < u_{n-1}},
\end{equation}
and the times for the latter $n-1$ events as
\begin{equation}
T_{n,j}:=u_{n-1}1_{u_{n-1}<d_{n}}+u_{n-2}1_{u_{n-2}<d_{n-1}}+...+u_{n- j}1_{u_{n-j}<d_{n-j+1}}+d_{n-j}1_{d_{n-j}<u_{n-j-1}} , ~~~j=1,\dots,n-1,
\end{equation}
we can derive the following representation of $\rho_{N}$.
It will turn out to be useful to introduce
$$\bar{G}_{c}(t)=e^{-(\lambda+q)ct}\quad \text{and} \quad g_{c}(t)=\lambda c e^{-(\lambda+q)ct}.$$
\begin{theorem}
\label{thm5.1}
	For
	 $\frac{b}{(a+1)^{n}}<x\leq \frac{b}{(a+1)^{n-1}}$,
	 \begin{eqnarray}
	 \rho_{N}(x) &=&
	\bar{G}_{1}\left(x-\frac{b}{(a+1)^{n}}\right)\rho_{n}\nonumber\\
	&+&\bar{G}_{(a+1)}g \circledast _1\left(x-\frac{b}{(a+1)^{n}}\right)\rho_{n-1}\nonumber\\
	&+&\bar{G}_{(a+1)^{2}} \circledast g_{(a+1)} \circledast g_1\left(x-\frac{b}{(a+1)^{n}}\right)\rho_{n-2}\nonumber\\
	&+&...
\nonumber
\\
&+&\bar{G}_{(a+1)^{n-1}} \circledast g_{(a+1)^{n-2}} \circledast ... \circledast g_1\left(x-\frac{b}{(a+1)^{n}}\right)\rho_{1},
\label{eq.main}
		 \end{eqnarray}
		 where $\circledast$ denotes convolution.
\end{theorem}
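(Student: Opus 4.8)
The plan is to partition the event $\{d_N<u_0\}$, for a surplus started at $x\in(L_n,L_{n-1}]$, according to the highest grid level the process reaches before its first down-crossing of a level in $\mathcal{N}$; these pieces are precisely the events underlying $T_{n,0},\dots,T_{n,n-1}$. Two structural facts drive everything. First, between jumps the surplus decreases at unit rate, so from level $u$ it takes deterministic time to reach any lower level. Second, a proportional jump maps any point of $(L_m,L_{m-1}]$ to a point of $(L_{m-1},L_{m-2}]$, i.e.\ it carries the process up by exactly one grid interval, because $L_{m-1}=(1+a)L_m$. Hence, for $L_{n-j}$ to be the first level of $\mathcal{N}$ that is down-crossed, the process must first climb $j$ intervals by making $j$ successive jumps (each occurring before the bottom of the current interval is reached) and then drift across $L_{n-j}$ with no further jump.

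First I would set up the one-step (first-jump) equation. Conditioning on the epoch $t$ of the first Poisson jump, and using that jumps arrive at rate $\lambda$ while the discount runs at rate $q$, the strong Markov property gives, for $x\in(L_n,L_{n-1}]$,
\begin{equation*}
\rho_N(x)=\bar{G}_1\!\left(x-L_n\right)\rho_n+\int_0^{\,x-L_n} g_1(t)\,\rho_N\!\big((1+a)(x-t)\big)\,\mathrm{d}t .
\end{equation*}
The first term is the no-jump contribution: the surplus reaches $L_n$ at time $x-L_n$ with probability $e^{-\lambda(x-L_n)}$ and discount $e^{-q(x-L_n)}$, so its weight is $\bar{G}_1(x-L_n)$, after which the process restarts from $L_n$ with value $\rho_n$. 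The integrand is the discounted density $g_1(t)=\lambda e^{-(\lambda+q)t}$ of a first jump at level $x-t$, after which the process restarts from $(1+a)(x-t)\in(L_{n-1},L_{n-2}]$. Since the jump lands the process one interval higher, the same identity may be substituted for $\rho_N$ on the interval of index $n-1$, and then iterated.

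The core computation is to show that iterating this identity reproduces exactly the stated convolution. Writing $y=x-L_n$ and substituting $s=(1+a)\sigma$ each time a jump rescales the relevant height by $(1+a)$ (again via $L_{m-1}=(1+a)L_m$), one verifies the scaling identity
\begin{equation*}
\big(g_{c_1}\circledast\dots\circledast g_{c_m}\circledast \bar{G}_{c_{m+1}}\big)\big((1+a)r\big)=\big(g_{(1+a)c_1}\circledast\dots\circledast g_{(1+a)c_m}\circledast \bar{G}_{(1+a)c_{m+1}}\big)(r),
\end{equation*}
in which the $m$ Jacobian factors $(1+a)$ cancel exactly against the amplitude relations $g_c((1+a)\sigma)=(1+a)^{-1}g_{(1+a)c}(\sigma)$, while $\bar{G}_c((1+a)\sigma)=\bar{G}_{(1+a)c}(\sigma)$. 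An induction on $j$ then identifies the discounted weight of the $T_{n,j}$ event (namely $j$ successive jumps followed by a final descent) with $\bar{G}_{(a+1)^j}\circledast g_{(a+1)^{j-1}}\circledast\dots\circledast g_1$ evaluated at $y$, multiplied by the restart value $\rho_{n-j}$ supplied by the strong Markov property at the down-crossing time of $L_{n-j}$. The iteration terminates at $j=n-1$: after $n-1$ climbs the surplus lies in $(L_1,b]$, and any further jump would up-cross the barrier $b=L_0$, i.e.\ realize $u_0$ before $d_N$ and contribute nothing; this is exactly why the sum stops at the $\rho_1$ term.

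The main obstacle is the bookkeeping of this geometric rescaling: tracking through the iteration how each jump multiplies the governing height by $(1+a)$ and thereby promotes the kernels $g_c,\bar{G}_c$ to $g_{(1+a)c},\bar{G}_{(1+a)c}$, and checking that the Jacobian and amplitude factors cancel so that no spurious constants survive. A secondary point needing care is the clean justification that the $T_{n,j}$ events genuinely partition $\{d_N<u_0\}$ and that the strong Markov property applies at each intermediate crossing time, so that the discounted contribution of each piece factors as a convolution in time against the restart value $\rho_{n-j}$.
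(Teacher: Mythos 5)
Your proposal is correct and follows essentially the same route as the paper: you decompose $\{d_N<u_0\}$ by which level of $\mathcal{N}$ is first down-crossed, apply the strong Markov property at each intermediate crossing, and absorb the $(1+a)$ rescaling into the convolution kernels — your scaling identity is exactly the paper's change of variables $y_j=t_j/(a+1)^{j-1}$ in its $j$-fold nested integral, and your first-jump recursion is just that integral unrolled one step at a time. No gaps; this is a valid (and slightly more systematically organized) presentation of the paper's own argument.
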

\begin{proof}
	Let  $\frac{b}{(a+1)^{n}}<x\leq \frac{b}{(a+1)^{n-1}}$.
The $n$ terms in the righthand side of (\ref{eq.main}) represent the $n$ disjoint possibilities where $d_n < u_0$.
Notice that
$T_{n,j}$ is the first time that the process $U$ reaches a level in  $\mathcal{N}$ via a down-crossing, by reaching $\frac{b}{(a+1)^{n-j}}$.
Furthermore, $\rho_{n-j}$ is the Laplace transform of the time to reach  $\frac{b}{(a+1)^{N}}$ starting at $\frac{b}{(a+1)^{n-j}}$.
Now first considering $T_{n,0}$, we have	
	\begin{equation}\label{eq.dn}
	E_{x}\left[ e^{-qd_{n}}1_{d_{n}<u_{n-1}}\right]
= {\rm e}^{-(q+\lambda)(x - \frac{b}{(a+1)^n})}
	=\bar{G}_1\left(x-\frac{b}{(a+1)^{n}}\right) .
	\end{equation}
By the strong Markov property, considering $T_{n,j}$, we derive
		\begin{eqnarray}\label{eq.Tnj}
		&&\E_{x}\left[e^{-q(u_{n-1}+\dots+u_{n-j} +d_{n-j})} 1_{u_{n-1}<d_n,\dots,u_{n-j}<d_{n-j+1},d_{n-j}<u_{n-j-1}} \right]=\nonumber\\
		&&\nonumber\\
		&&\qquad\int_{t_{1}=0}^{A_{n,0}}\int_{t_{2}=0}^{A_{n,1}}...\int_{t_{j}=0}^{A_{n,j-1}}\bar{G}_1(A_{n,j})g_1(t_{j})g_1(t_{j- 1})...g_1(t_{1}) {\rm d}t_{j}... {\rm d}t_{1} ,
		\end{eqnarray}
where
               \begin{equation}
		A_{n,0}:=x-\frac{b}{(a+1)^{n}},
		\end{equation}
		and for $k=1,2,...,n$,
			\begin{equation}\label{eq.Ank}
		A_{n,k}:=(a+1)(A_{n,k-1}-t_{k}) ,
				\end{equation}
with $t_k$ an integration variable.
		By the change of variables $y_{j}=t_{j}/(a+1)^{j-1}$ we obtain that:
		\begin{eqnarray}\label{eq.Tnj1}
		&&\E_{x}\left[e^{-q(u_{n-1}+\dots+u_{n-j} +d_{n-j})} 1_{u_{n-1}<d_n,\dots,u_{n-j}<d_{n-j+1},d_{n-j}<u_{n-j-1}} \right]=\nonumber\\
		&&\qquad=\bar{G}_{(a+1)^{j}} \circledast g_{(a+1)^{j-1}} \circledast ... \circledast
		g_1\left(x-\frac{b}{(a+1)^{n}}\right) ,
\nonumber\\
		\end{eqnarray}
	and the theorem follows.	
		\end{proof}
		From Theorem \ref{thm5.1} it follows that to obtain $\rho_{n}$ we need to solve the following $N-1$ equations for $n=1,2,...,N-1$:
		\begin{eqnarray}
		&& \rho_{n}=
		\bar{G}_{1}\left(\frac{b}{(a+1)^{n}}-\frac{b}{(a+1)^{n+1}}\right)\rho_{n+1}\nonumber\\
		&&\qquad+\bar{G}_{(a+1)} \circledast g_1\left(\frac{b}{(a+1)^{n}}-\frac{b}{(a+1)^{n+1}}\right)\rho_{n}\nonumber\\
		&&\qquad+\bar{G}_{(a+1)^{2}} \circledast g_{(a+1)} \circledast g_1\left(\frac{b}{(a+1)^{n
		}}-\frac{b}{(a+1)^{n+1}}\right)\rho_{n-1}\nonumber\\
		&&\qquad+\quad ...
\nonumber
\\
&&\qquad+\bar{G}_{(a+1)^{n}} \circledast g_{(a+1)^{n-1}} \circledast ... \circledast g_1\left(\frac{b}{(a+1)^{n
		}}-\frac{b}{(a+1)^{n+1
}}\right)\rho_{1}. \label{eq.main1}
		\end{eqnarray}
		Defining
		\[\gamma_{0}(x):=\bar{G}_1(x), \]
		and for $n\geq 1$,
		\[\gamma_{n}(x):=\bar{G}_{(a+1)^{n}} \circledast g_{(a+1)^{n-1}} \circledast ... \circledast g_1(x), \]
		then by the formula for convolution of exponentials given in \cite[Chap. 5]{Ross}
for $n\geq 1$:	
		\begin{equation}
		\gamma_{n}(x)=
	 \left(\frac{\lambda}{\lambda+q}\right)^n \sum_{i=0}^{n}\frac{e^{- (\lambda + q)(a+1)^{i}x}}{\prod_{j\neq i} ((a+1)^{j}-(a+1)^{i})}.
		\end{equation}
		Thus we have the following set of linear
		 equations for $n=1,2,...,N-1$:
\begin{equation}
		 \rho_{n}=\sum_{j=0}^{n}\gamma_{j}\left(\frac{b}{\left(a+1\right)^{n}}\left(1-\frac{1}{a+1}\right)\right)\rho_{n+1-j}.
\label{rhon1}
\end{equation}
	 Notice that $\rho_{N}=1$.
The above formula can be rewritten as follows:
\begin{equation}
\rho_n = \sum_{j=0}^n \gamma_{j,n} \rho_{n+1-j} = \sum_{j=1}^{n+1} \gamma_{n+1-j,n} \rho_j ,
\label{rhon}
\end{equation}
where
\begin{equation}
\gamma_{j,n} := \gamma_j\left(\frac{b}{(a+1)^n}\left(1 - \frac{1}{a+1}\right)\right).
\label{gammajn}
\end{equation}
Introducing the $(N-1) \times (N-1)$ matrix $\Gamma$, with as its $n$th row $(\gamma_{n,n}, \gamma_{n-1,n},\dots, \gamma_{0,n},0\dots,0)$,
and the column vector $\rho := (\rho_1,\dots,\rho_{N-1})^T$,
we can write the set of equations (\ref{rhon}) as
\begin{equation}
\rho = \Gamma \rho +  Z,
\end{equation}
where $Z =(0,\dots,0, \gamma_{0,N-1})^T$.
Hence, with $I$ the $(N-1) \times (N-1)$ matrix with ones on the diagonal and zeroes at all other positions,
\begin{equation}
\rho = (I - \Gamma)^{-1} Z .
\end{equation}

\subsection{Expected discounted dividends}
\label{sec5.2}

Recall that
$v_{N}(x)$ defined in \eqref{vN}
is the expected discounted dividends
under the barrier strategy  until reaching $\frac{b}{(a+1)^{N}}$, that is up to
$\tau_x^b(N)$ for the regulated process $U^b(t)$ defined in \eqref{regproc}.
Note that
\[v(x)=\lim_{N\rightarrow+\infty} v_N(x)\]
for $v(x)$ defined in \eqref{vpi}.
Let
\begin{equation}
	 v_{n}:=v_{N}\left(\frac{b}{(a+1)^{n}}\right)\quad\text{for $n=0,1,\dots,N-1$.}
\label{vndef}
\end{equation}
The next theorem identifies  $v_{N}(x)$.
	  	 \begin{theorem}
\label{thm5.2}
	 	For
	 	$\frac{b}{(a+1)^{n}}<x\leq \frac{b}{(a+1)^{n-1}}$,
	 	\begin{eqnarray}
	 	&& v_{N}\left(x\right)=
	 	\bar{G}_{1}\left(x-\frac{b}{\left(a+1\right)^{n}}\right)v_{n}\nonumber\\
	 	&&\qquad+\bar{G}_{\left(a+1\right)} \circledast g_1\left(x-\frac{b}{\left(a+1\right)^{n}}\right)v_{n-1}\nonumber\\
	 	&&\qquad+\bar{G}_{\left(a+1\right)^{2}} \circledast g_{\left(a+1\right)} \circledast g_1\left(x-\frac{b}{\left(a+1\right)^{n}}\right)v_{n-2}\nonumber\\
	 	&&\qquad+\quad...
\nonumber
\\
&&\qquad+\bar{G}_{\left(a+1\right)^{n-1}} \circledast g_{\left(a+1\right)^{n-2}} \circledast ... \circledast g_1\left(x-\frac{b}{\left(a+1\right)^{n}}\right)v_{1}\nonumber\\
		 	&&\qquad+1 \circledast	g_{\left(a+1\right)^{n-1}} \circledast ... \circledast g_1\left(x-\frac{b}{\left(a+1\right)^{n}}\right)v\left(b\right)\nonumber\\
	 		&&\qquad+\left(a+1\right)^{n}\mathcal{Q} \circledast g_{\left(a+1\right)^{n-1}} \circledast ... \circledast g_1\left(x-\frac{b}{\left(a+1\right)^{n}}\right),
	 	\label{eq.mainv}
	 	\end{eqnarray}
	 	where $\mathcal{Q}(x)=x$ and  $\circledast$ again denotes convolution.
	 \end{theorem}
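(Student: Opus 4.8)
The plan is to mirror the path decomposition used in the proof of Theorem~\ref{thm5.1}, but now keeping track of the dividends paid whenever the regulated process $U^b$ overshoots the barrier $b=L_0$. Fix $x\in(L_n,L_{n-1}]$ and run $U^b$ until it first either down-crosses a level of $\mathcal N$ or up-crosses $b$. As in Theorem~\ref{thm5.1} I would split this trajectory into $n+1$ disjoint events: for $j=0$ the process drifts down through $L_n$ before any jump; for $j=1,\dots,n-1$ it up-crosses $L_{n-1},\dots,L_{n-j}$ in turn (each before the corresponding down-crossing) and then down-crosses $L_{n-j}$ before the next jump; and in the one remaining event it up-crosses $L_{n-1},\dots,L_1$ and then the $n$-th jump carries it above $b$. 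Since each proportional jump moves the process from a band $(L_{k+1},L_k]$ to the next band $(L_k,L_{k-1}]$, crossing $b$ from the starting band needs exactly $n$ jumps, so these events are exhaustive and mutually exclusive.

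For the first $n$ events no dividend is paid before the relevant level of $\mathcal N$ is reached, so by the strong Markov property the discounted contribution of each factorizes as the discount/no-jump weight of the excursion times the continuation value $v_{n-j}=v_N(L_{n-j})$. These weights are exactly the quantities computed in \eqref{eq.dn}--\eqref{eq.Tnj1}: with $A_{n,0}=x-L_n$, the recursion \eqref{eq.Ank}, and the change of variables $y_j=t_j/(a+1)^{j-1}$, they collapse into $\bar G_{(a+1)^j}\circledast g_{(a+1)^{j-1}}\circledast\cdots\circledast g_1(x-L_n)$. This reproduces the first $n$ lines of \eqref{eq.mainv} with $v_{n-j}$ in place of $\rho_{n-j}$.

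The new ingredient is the event in which the $n$-th jump lifts $U^b$ above $b$. Just before that jump the process sits at some $w\in(L_1,b]$ and the jump would take it to $(a+1)w=b+A_{n,n}$, with $A_{n,n}=(a+1)(A_{n,n-1}-t_n)$ from the same recursion \eqref{eq.Ank}. By \eqref{upperregion} the value attached to this overshoot is $v_N(b+A_{n,n})=A_{n,n}+v(b)$: the excess $A_{n,n}$ is paid out immediately as a dividend while the process restarts from the barrier with continuation value $v(b)=v_N(b)$. I would then split this contribution into its two parts. The constant part $v(b)$ carries only the discount/probability weight of the $n$ successive jumps and, after the same rescaling, becomes $1\circledast g_{(a+1)^{n-1}}\circledast\cdots\circledast g_1(x-L_n)\,v(b)$; the leading $1$ replaces the final survival factor $\bar G$ present in the down-crossing terms precisely because this excursion ends with a jump across $b$ rather than a drift down to a level of $\mathcal N$. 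For the dividend part one uses that, under $y_k=t_k/(a+1)^{k-1}$, the overshoot telescopes to $A_{n,n}=(a+1)^n\big(A_{n,0}-\sum_{k=1}^n y_k\big)$, so that with $\mathcal Q(x)=x$ the expected discounted payout becomes exactly $(a+1)^n\,\mathcal Q\circledast g_{(a+1)^{n-1}}\circledast\cdots\circledast g_1(x-L_n)$. Summing the three groups of terms gives \eqref{eq.mainv}.

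I expect the difficulties to be bookkeeping rather than conceptual. The main obstacle is the dividend (overshoot) term: one must check that the immediate payout $A_{n,n}$, after the rescaling $y_k=t_k/(a+1)^{k-1}$, yields precisely the linear kernel $\mathcal Q$ together with the prefactor $(a+1)^n$, and that the reset after payment contributes the continuation value $v(b)=v_N(b)$ and not the value at some other level. A secondary point to state carefully is why the survival factor appearing in the down-crossing terms is replaced by the constant $1$ in the two up-crossing terms, which is simply the statement that the terminal event of that excursion is a jump across $b$ rather than a drift down to a level of $\mathcal N$.
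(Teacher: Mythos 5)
Your proposal is correct and follows essentially the same route as the paper: the same $n+1$-fold disjoint decomposition according to which level of $\mathcal{N}$ is first down-crossed (or whether $b$ is up-crossed first), the same change of variables $y_j=t_j/(a+1)^{j-1}$ to collapse the iterated integrals into convolutions, and the same splitting of the overshoot contribution $v_N(b+A_{n,n})=A_{n,n}+v(b)$ into the $\mathcal{Q}$-term and the $v(b)$-term weighted by $1\circledast g_{(a+1)^{n-1}}\circledast\cdots\circledast g_1$. No gaps.
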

\begin{proof}
The proof follows exactly the same reasoning as the proof of Theorem~\ref{thm5.1}:
we  consider again the disjoint events in which the first down-crossing of a level from $\mathcal{N}$ occurs at $L_n-j$,  $j=0,\dots,n-1$.
However, we now do not exclude the possibility that $L_0 = b$ is up-crossed  before level $L_N$ is reached.
This gives rise to the last two lines of (\ref{eq.mainv}). More precisely,
let $L_{n}<x\leq L_{n-1}$ and
$\mathcal{A}_{n}$ be  the event that level $ L_{0}=b$  is up-crossed before down-crossing one of the levels $L_{j}, j=1,...,N$. This event occurs when each of the following $n$ jumps occurs before down-crossing $L_{n-j+1}, j=0,...,n$, i.e. when $u_{n-j}<d_{n+1-j} ,j=1,...,n$. For $L_n<x\leq L_{n-1}$, the time to this event is
\begin{equation}
\Upsilon_{n}(x):=u_{n-1}1_{u_{n-1}<d_{n}}+u_{n-2}1_{u_{n-2}<d_{n-1}}+...+u_{1 }1_{u_{1}<d_{2}}+u_{0}1_{u_{0}<d_{1}}.
\end{equation}
The Laplace transform of $\Upsilon_{n}$ (or the discounted time until  $\mathcal{A}_{n}$  occurs) starting at $x$ with $L_{n}<x\leq L_{n-1}$ can be obtained by similar arguments as those leading to  (\ref{eq.Tnj1}), that is,
\begin{eqnarray}
&&\E_{x}\left[e^{-q(u_{n-1}+\dots+u_{0} )} 1_{u_{n-1}<d_n,\dots , u_{0}<d_{1}} \right]\nonumber\\
&&=	1 \circledast
g_{(a+1)^{n-1}} \circledast ... \circledast g_1\left(x-\frac{b}{(a+1)^{n}}\right).
\end{eqnarray}
Once the process up-crosses $b$, dividend is paid and the process restarts at level $b$. Thus the one-but-last
line of (\ref{eq.mainv}) is the expected discounted dividends paid until ruin  starting at time $u_{0}$ at level $b$ (not including the dividends paid at this time).
The expected discounted dividends paid at
$u_{n-1}+\dots + u_{0}$ is:
\begin{eqnarray}
&&\int_{t_{1}=0}^{A_{n,0}}\int_{t_{2}=0}^{A_{n,1}}...\int_{t_{n}=0}^{A_{n,n-1}}A_{n,n}g_1(t_{n})g_1(t_{n- 1})...g_1(t_{1}) {\rm d}t_{n}... {\rm d}t_{1}\nonumber\\
&&=(a+1)^{n}\mathcal{Q} \circledast g_{(a+1)^{n-1}} \circledast ... \circledast  g_{1}\left(x-\frac{b}{(a+1)^{n}}\right),
\end{eqnarray}
where $A_{n,n}$ is defined in (\ref{eq.Ank}) and the last equality is obtained by change of variables $y_{j}=\frac{t_{j}}{(1+a)^{j-1}}, j=1,2,...,n$.
\end{proof}

It remains to determine $v_0=v(b),v_1,\dots,v_{N-1}$, since then from Theorem~\ref{thm5.2} we have $v_N(x)$ for all $x \in (0,b]$.
Notice that $v_N=0$.
We first derive an equation for $v(b)$.
Let $$\delta_{n}:=(a+1)^{n}\mathcal{Q} \circledast g_{(a+1)^{n-1}} \circledast ... \circledast g_1\left(\frac{b}{(a+1)^{n}}\left(1-\frac{1}{a+1}\right) \right),$$
and let $$\omega_{n}:=1 \circledast g_{(a+1)^{n-1}} \circledast ... \circledast g_{1}\left(\frac{b}{(a+1)^{n-1}}\left(1-\frac{1}{a+1}\right)\right).$$
We distinguish between the following cases, when starting from $b$: (i) level $L_1 = \frac{b}{a+1}$ is reached before $b$ is up-crossed again;
this gives rise to the first term in the righthand side of (\ref{eqvb}) below;
(ii) $b$ is up-crossed before level $L_1$ is reached. Thus
\begin{equation}
v(b)=\bar{G}_1\left(b-\frac{b}{a+1}\right) v\left(\frac{b}{a+1}\right)
+ \frac{\lambda}{\lambda+q}\left(1-\bar{G}_1\left(b-\frac{b}{a+1}\right)\right)v(b)
+(a+1)\mathcal{Q} \circledast g_{1}\left(b-\frac{b}{a+1}\right).
\label{eqvb}
\end{equation}
Notice that $\frac{\lambda}{\lambda+q}(1-\bar{G}_1(b-\frac{b}{a+1}))=1 \circledast g_{1}\left(b-\frac{b}{a+1}\right)=\omega_{1}$.
Hence $v_{0}=\gamma_{0}(b-\frac{b}{a+1})v_{1} + \omega_{1}v_{0} + \delta_1$.
By taking $x = \frac{b}{(a+1)^{n-1}}$ in Theorem~\ref{thm5.2},
we get for $n= 1,\dots,N-1$,
\begin{equation}
v_{n}=\sum_{j=0}^{n}\gamma_{j}\left(\frac{b}{(a+1)^{n-1}}\left(1-\frac{1}{a+1}\right)\right) v_{n+1-j}+\omega_{n+1}v_{0}+
\delta_{n+1}.
\label{vn}
\end{equation}
Introducing
the column vector $V := (v_0,\dots,v_{N-1})^T$,
we can write the equation for $v_0$ and the set of equations (\ref{vn}) together as
\begin{equation}
V = \Psi V +  \Delta,
\end{equation}
where  $\Psi$ is an $N\times N$ matrix with row $n, n=0,\cdots,N-1$ equal to $(\omega_{n+1},\gamma_{n,n},\cdots,\gamma_{0,n},0,\cdots,0)$. Notice that row $N-1$ is $(\omega_{N},\gamma_{N-1,N-1,},\cdots,\gamma_{1,N-1})$.
Hence
\begin{equation}
V = (I - \Psi)^{-1} \Delta .
\end{equation}

\begin{remark}
\rm Our analysis can be used to solve the two-sided upward exit problem for our risk process as well.
We recall that $\mu_{n}=\mu_{N}\left(\frac{b}{(a+1)^n}\right)$ and it is defined in \eqref{muN}.
Then by the same arguments as those leading to (\ref{eq.mainv}) and (\ref{vn}), for
	$\frac{b}{(a+1)^{n}}<x\leq \frac{b}{(a+1)^{n-1}}$, we obtain that:	
	\begin{eqnarray}
	&& \mu_{N}(x)=
	\bar{G}_{1}\left(x-\frac{b}{(a+1)^{n}}\right)\mu_{n}\nonumber\\
	&&\qquad+\bar{G}_{(a+1)} \circledast g_1\left(x-\frac{b}{(a+1)^{n}}\right)\mu_{n-1}\nonumber\\
	&&\qquad+\bar{G}_{(a+1)^{2}} \circledast g_{(a+1)} \circledast g_1\left(x-\frac{b}{(a+1)^{n}}\right)\mu_{n-2}\nonumber\\
	&&\qquad+\quad ...
	\nonumber
	\\
	&&\qquad+\bar{G}_{(a+1)^{n-1}} \circledast g_{(a+1)^{n-2}} \circledast ... \circledast g_1\left(x-\frac{b}{(a+1)^{n}}\right)\mu_{1}\nonumber\\
	&&\qquad+1 \circledast g_{(a+1)^{n-1}} \circledast ... \circledast g_1\left(x-\frac{b}{(a+1)^{n}}\right) .
	\label{eq.mainmu}
	\end{eqnarray}
	Similar to the equation for $v_0$ and Equation (\ref{vn}) we obtain that
	\begin{eqnarray}
	&&\mu_{0}=\gamma_{0}(b-\frac{b}{a+1})\mu_{1}+\omega_{1},\\
	&&\mu_{n}=\sum_{j=0}^{n}\gamma_{j}\left(\frac{b}{(a+1)^{n-1}}\left(1-\frac{1}{a+1}\right)\right) \mu_{n+1-j}+\omega_{n+1}, ~~~ n=1,\dots,N-1.
	\end{eqnarray}
Moreover, observe that $\mu_0 = \mu_N(b)=1$.

\end{remark}

\section{Exit times and barrier dividends value function with Brownian perturbation}\label{sec:Brownian}
In this section we extend the model of Section 4 by allowing small perturbations between jumps.
These perturbations are modeled by
a Brownian motion $X(t)$  with drift $\eta$ and variance $\sigma^{2}$, that is,
\begin{equation}\label{linearBrownian}
X(t)=\eta t +\sigma B(t), \end{equation}
for a standard Brownian motion $B(t)$.
Hence our risk process is formally defined as
\begin{equation}\label{riskprocessB}
U(t)=x+X(t)+\sum_{i=1}^{N(t)}aU(T_i-), ~~~t \geq 0, \end{equation}
where $N(t)$ is a Poisson process with intensity $\lambda >0$.
We apply the fluctuation theory of one-sided L\'{e}vy processes to solve
the two-sided exit problems (Subsection~\ref{Bsec5.1}), and to obtain the expected discounted barrier dividends (Subsection~\ref{Bsec5.2}).
The key functions for this fluctuation theory are the scale functions; see \cite{kyprianou2006}.
To introduce these functions let us first define the Laplace exponent of $X(t)$:
\begin{equation*}\label{laplaceexponent}
\psi(\theta) := \frac{1}{t}\log \mathbb{E}_x[e^{\theta X_t}] = \eta \theta + \frac{\sigma^2}{2} \theta^2 .
\end{equation*}
This function is strictly convex, differentiable, equals zero at zero and tends to infinity at infinity.
Hence its right inverse $\Phi(q)$ exists for $q\geq 0$.
The first scale function $W^{(q)}(x)$ is the unique right-continuous function disappearing on the negative half-line whose Laplace transform is
\begin{equation}\label{scaleFunction}
\int_0^{\infty} e^{-\theta x} W^{(q)}(x) {\rm d}x = \frac{1}{\psi(\theta)-q} ,
\end{equation}
for $\theta>\Phi(q)$.
With the first scale function we can associate a second scale function via $Z^{(q)}(x):=1-q\int_{0}^{x}W^{(q)}(y)dy$.
In the case of linear Brownian motion defined in \eqref{linearBrownian} the (first) scale function for a Brownian motion with drift $\eta$ and variance $\sigma^{2}$ equals
(cf.\  \cite{kkr2013})
\[ W^{(q)}(x)
  =\frac{ 1}{\sqrt{\eta^{2}+2q\sigma^{2}}}\left[e^{(\sqrt{\eta^{2}+2q\sigma^{2}}-\eta)\frac{x}{\sigma^{2}}}-e^{-(\sqrt{\eta^{2}+2q\sigma^{2}}+\eta)\frac{x}{\sigma^{2}}}\right].
  \]
Let $\alpha<x<\beta$ and
 $$d_{\alpha}^X=\min\{t: X(t)=\alpha\}\quad\text{and}\quad u_{\beta}^X=\min\{t:X(t)=\beta\}.$$
Throughout this section we use the following three facts given in Theorem 8.1 and Theorem 8.7 of Kyprianou \cite{kyprianou2006}:
\begin{enumerate}
\item
\begin{equation}\label{eq.W} \E_{x}\left[e^{-qu_{\beta}^X}1_{u_{\beta}^X< d_{\alpha}^X}\right]=\frac{W^{(q)}(x-\alpha)}{W^{(q)}(\beta - \alpha)}. \end{equation}
\item\label{2}
\begin{equation}\label{2} \E_{x}\left[e^{-
	qd_{\alpha}^X}1_{d_{\alpha}^X<u_{\beta}^X}\right]=Z^{(q)}(x-\alpha)-\frac{W^{(q)}(x-\alpha)}{W^{(q)}(\beta - \alpha)}Z^{(q)}(\beta-\alpha).
\end{equation}
\item Let $\mathcal{E}_{q}$ be an exponentially distributed random variable with parameter $q$ independent of the process $X$. Then for $\alpha <x< \beta$:
\begin{equation}\label{eq.uq}
\frac{\prob_{x}(X(\mathcal{E}_{q})\in (y,y+{\rm d}y), \mathcal{E}_{q}<u_{\beta}^X\wedge d_{\alpha}^X)}{q{\rm d}y}=u^{(q)}_{\alpha,\beta}(x,y)=\frac{W^{(q)}(x-\alpha)}{W^{(q)}(\beta - \alpha)}W^{(q)}(\beta-y)-W^{(q)}(x-y).
\end{equation}
\end{enumerate}
\subsection{Downward exit problem and ruin time}
\label{Bsec5.1}
In this subsection we obtain
\begin{equation}\label{rhoonceagain}
\rho_{N}(x)=\E_{x}[e^{-qd_{N}}1_{d_{N}<u_{0}}].\end{equation}
This is done in three steps.
In Step 1 we determine the LST of the time, starting from some $x \in (L_{n},L_{n-1})$, to reach a level in $\mathcal{N}$ by down-crossing $L_{n-k}$, $k = 0, 1, \dots,n-1$.
In Step 2 we determine the LST of the time, starting from some $x \in (L_{n},L_{n-1})$, to reach a level in $\mathcal{N}$ by up-crossing $L_{n-k}$, $k =  1,2, \dots,n$.
In Step 3 we express $\rho_N(x)$  in $\rho_1,\dots,\rho_N$, with $\rho_n$ the LST of the time to down-cross $L_N$, starting from $L_n$, and before up-crossing $L_0$.
We construct a system of linear equations in those $\rho_n$, with the LST's of Steps 1 and 2 featuring as coefficients in those equations.
\\

\noindent
{\em Step 1: The time until the first down-crossing of $L_{n-k}$}
\\
Let $L_{n}<x<L_{n-1}$, and let $d_n^X$ and $u_{n-1}^X$ denote the times the $X$ process first down-crosses $L_n$, respectively up-crosses $L_{n-1}$, when starting from $x$. By \eqref{2} we have
\begin{eqnarray}\label{eq.xi}
&&\xi_{n}(x-L_{n}):=\E_{x}[e^{-qd_{n}^X}1_{d^X_{n}<u^X_{n-1}\wedge \mathcal{E}_{\lambda}}]\nonumber\\
&&=Z^{(q+\lambda)}(x- L_{n})-\frac{\W (x- L_{n})}{\W(L_{n-1}- L_{n})}Z^{(q+\lambda)}( L_{n-1}- L_{n}).
\end{eqnarray}
Denoting by $\tau_{L_{n-k}}^{-}$ the first time that $U$ hits a level in $\mathcal{N}$ and this is done by down-crossing  $L_{n-k}$, we derive
\begin{equation}
\tau_{ L_{n-k}}^{-}=\mathcal{E}_{1,\lambda}1_{\mathcal{E}_{1,\lambda}<u^X_{n-1}\wedge d^X_{n}}+\mathcal{E}_{2,\lambda}1_{\mathcal{E}_{2,\lambda}<u^X_{n-2}\wedge d^X_{n-1}}+...+\mathcal{E}_{k,\lambda}1_{\mathcal{E}_{k,\lambda}<u^X_{n-k}\wedge d^X_{n+1-k}}+d^X_{n-k}1_{d^X_{n-k}<\mathcal{E}_{k+1
		,\lambda}\wedge u^X_{n-k-1}},
\end{equation}
where $\mathcal{E}_{k,\lambda}, k=1,...,N$ are i.i.d.\ distributed as $\mathcal{E}_{\lambda}$.
Let
\begin{eqnarray*}
&&r_{n,n-k}(x):=\E_{x}\left[e^{-q\tau_{ L_{n-k}}^{-}}\right]\\
&&=\E_{x}\left[e^{-q(\sum_{i=1}^{k}\mathcal{E}_{i,\lambda}+d^X_{n-k})}1_{\mathcal{E}_{1,\lambda}<d^X_{n}\wedge u^X_{n-1}}1_{\mathcal{E}_{2,\lambda}<d^X_{n-1}\wedge u^X_{n-2}}...1_{\mathcal{E}_{k,\lambda}<d^X_{n-k+1}\wedge u^X_{n-k}}1_{d^X_{n-k}<\mathcal{E}_{k+1,\lambda}\wedge u^X_{n-k-1}}\right].
\end{eqnarray*}
Observe that $r_{n,n-k}(x)$ is the partial LST of the time to reach $L_{n-k}$ from above before reaching any other level in $\mathcal{N}$.
Clearly,
\begin{equation}
r_{n,n}(x)=\xi_{n}(x-L_{n}).
\label{rnnx}
\end{equation}
Applying (\ref{eq.uq}) and (\ref{rnnx}) yields:
\begin{eqnarray}
&&r_{n,n-1}(x)=\lambda\int_{L_{n}}^{L_{n-1}}u^{(q+\lambda)}_{L_{n},L_{n-1}}(x,y)r_{n-1,n-1}((a+1)y) {\rm d}y\nonumber\\
&&=\lambda \frac{W^{(q+\lambda)}(x-L_{n})}{\W (L_{n-1}-L_{n})}\int_{L_{n}}^{L_{n-1}}\W(L_{n-1}-y)\xi_{n-1}((a+1)y-L_{n-1}){\rm d}y \nonumber
\\
&&-\lambda\int_{L_{n}}^{x}\W(x-y)\xi_{n-1}((a+1)y-L_{n-1}){\rm d}y.
\label{rnn-1x}
\end{eqnarray}
Note that
\begin{eqnarray*}
	&&\int_{L_{n}}^{x}\W(x-y)\xi_{n-1}((a+1)y-L_{n-1}){\rm d}y\\
	&&=\int_{0}^{x-L_{n}}\W(z)\xi_{n-1}((a+1)(x
	-z-L_{n})){\rm d}z\\
	&&=\W\circledast\xi_{n-1,a+1}(x-L_{n}),
	\end{eqnarray*}
where $\circledast$ again denotes convolution
and $\xi_{n,(a+1)^k}(x):=\xi_{n}((a+1)^kx)$, $k\in \mathbb{N}$.
Thus:
\begin{eqnarray}\label{rnn-1}
	&&r_{n,n-1}(x)=\lambda \frac{W^{(q+\lambda)}(x-L_{n})}{\W (L_{n-1}-L_{n})}
	\W\circledast\xi_{n-1,a+1}(L_{n-1}-L_{n})-\lambda \W\circledast\xi_{n-1,a+1}(x-L_{n}).\nonumber\\\label{rnn-1}
\end{eqnarray}
Denote
\begin{eqnarray}
	A_{0,n,n-1}:=\lambda\frac{\W\circledast\xi_{n-1,a+1}(L_{n-1}-L_{n})}{\W (L_{n-1}-L_{n})}\quad\text{and}\quad
	A_{1,n,n-1}:=\lambda.
\end{eqnarray}
Then
\begin{equation*}
r_{n,n-1}(x)=A_{0,n,n-1}W^{(q+\lambda)}(x-L_{n})-A_{1,n,n-1}\W\circledast\xi_{n-1,a+1}(x-L_{n}).
\end{equation*}
We next obtain $r_{n,n-2}(x)$. Applying (\ref{eq.uq}) and (\ref{rnn-1}) we have
\begin{eqnarray*}
	&&r_{n,n-2}(x)=\lambda\int_{L_{n}}^{L_{n-1}}u^{(q+\lambda)}_{L_{n},L_{n-1}}(x,y)r_{n-1,n-2}((a+1)y){\rm d}y\nonumber\\
	&&=\lambda \frac{W^{(q+\lambda)}(x-L_{n})}{\W (L_{n-1}-L_{n})}\int_{L_{n}}^{L_{n-1}}\W(L_{n-1}-y)\cdot\\
	&&\left(A_{0,n-1,n-2}W^{(q+\lambda)}((a+1)y-L_{n-1})-A_{1,n-1,n-2}\W\circledast\xi_{n-2,a+1}((a+1)y-L_{n-1})\right){\rm d}y\nonumber\\
	&&-\lambda\int_{L_{n}}^{x}\W(x-y)\cdot\\
	&&\left(A_{0,n-1,n-2}W^{(q+\lambda)}((a+1)y-L_{n-1})-A_{1,n-1,n-2}\W\circledast\xi_{n-2,a+1}((a+1)y-L_{n-1})\right){\rm d}y.\\
\end{eqnarray*}
Similarly as before, observe that
\begin{eqnarray}
	&&\int_{L_{n}}^{x}\W(x-y)W^{(q+\lambda)}((a+1)y-L_{n-1}){\rm d}y=\W\circledast\W_{a+1}(x-L_{n}) ,
\label{convo}
\end{eqnarray}
where $\W_{(a+1)^k}(x):=\W((a+1)^kx)$, $k\in \mathbb{N}$, and
\begin{eqnarray*}
	&&\int_{L_{n}}^{x}\W(x-y)\W\circledast\xi_{n-2,a+1}((a+1)y-L_{n-1}){\rm d}y\\
	&&=(a+1)\W\circledast\W_{a+1}\circledast\xi_{n-2,(a+1)^{2}}(x-L_{n}).
\end{eqnarray*}
Denote
\begin{eqnarray}
	&&A_{0,n,n-2}:=\frac{\lambda }{\W(L_{n-1}-L_{n})}\left(A_{0,n-1,n-2}\W\circledast\W_{a+1}(L_{n-1}-L_{n})\right.
\nonumber
\\
	&&\left.\quad-A_{1,n-1,n-2}(a+1)\W\circledast\W_{a+1}\circledast\xi_{n-2,(a+1)^{2}}(L_{n-1}-L_{n})\right)
\nonumber
\\
	&&=\frac{1 }{\W(L_{n-1}-L_{n})}\cdot
\nonumber
\\
	&&\quad\left(A_{1,n,n-2}\W\circledast\W_{a+1}(L_{n-1}-L_{n})-
	A_{2,n,n-2}	\W\circledast\W_{a+1}\circledast\xi_{n-2,(a+1)^{2}}(L_{n-1}-L_{n})\right),
\nonumber
\\
	&&A_{1,n,n-2}:=\lambda A_{0,n-1,n-2},\qquad A_{2,n,n-2}:=\lambda (a+1)A_{1,n-1,n-2}.
\end{eqnarray}
Then
\begin{eqnarray*}
	&&r_{n,n-2}(x)=A_{0,n,n-2}\W(x-L_{n})-A_{1,n,n-2}\W\circledast\W_{a+1}(x-L_{n})\\
	&&\qquad+A_{2,n,n-2}\W \circledast\W_{a+1}\circledast \xi_{n-2,(a+1)^{2}}(x-L_{n}).	
\end{eqnarray*}
The general case for $k =2,\dots,n-1$ is given in the following proposition.
\begin{proposition}\label{prop6.1}
	For $L_{n}<x<L_{n-1}$ and $k =2,\dots,n-1$,
	\begin{eqnarray}\label{eq.rnk}
		&&r_{n,n-k}(x)
	=\sum_{j=0}^{k-1}(-1)^{j}A_{j,n,n-k}
	\circledast_{i=0}^{j}\W_{(a+1)^{i}}(x-L_{n})\nonumber\\
	&&\qquad+(-1)^{k}A_{k,n,n-k}\circledast_{i=0}^{k-1}\W_{(a+1)^{i}}
	\circledast\xi_{n-k,(a+1)^{k}}(x-L_{n}),
\end{eqnarray}
where $A_{j,n,n-k}$, $j=0,...,k$ are coefficients which are obtained recursively.
\end{proposition}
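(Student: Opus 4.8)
The plan is to prove \eqref{eq.rnk} by induction on $k$, the base cases $k=0,1,2$ being the already-established identities \eqref{rnnx}, \eqref{rnn-1} and the explicit expression for $r_{n,n-2}(x)$. The engine of the induction is the one-step recursion
\begin{equation*}
r_{n,n-k}(x)=\lambda\int_{L_{n}}^{L_{n-1}}u^{(q+\lambda)}_{L_{n},L_{n-1}}(x,y)\,r_{n-1,n-k}\big((a+1)y\big)\,{\rm d}y ,\qquad L_{n}<x<L_{n-1},
\end{equation*}
which I would justify by the strong Markov property in exactly the way \eqref{rnn-1x} was obtained for $k=1$: to first reach a level of $\mathcal N$ by down-crossing $L_{n-k}$ with $k\geq1$, the surplus must leave the strip $(L_{n},L_{n-1})$ through the top, which can only occur at a Poisson epoch of the jump clock while $X$ remains inside the strip; by \eqref{eq.uq} the discounted, killed density of the position $y$ at that epoch is $\lambda\,u^{(q+\lambda)}_{L_{n},L_{n-1}}(x,y)$, after which the proportional jump sends the surplus to $(a+1)y\in(L_{n-1},L_{n-2})$ and the problem restarts as $r_{n-1,n-k}$. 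Since $r_{n-1,n-k}=r_{n-1,(n-1)-(k-1)}$, the induction hypothesis furnishes $r_{n-1,n-k}$ in the claimed form with $k-1$ and $n-1$ in place of $k$ and $n$.

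Next I would substitute this hypothesis into the recursion and insert the explicit resolvent $u^{(q+\lambda)}_{L_{n},L_{n-1}}(x,y)=\frac{\W(x-L_{n})}{\W(L_{n-1}-L_{n})}\W(L_{n-1}-y)-\W(x-y)$. This splits $r_{n,n-k}(x)$ into a boundary part proportional to $\W(x-L_{n})$ and a convolution part $-\lambda\int_{L_{n}}^{x}\W(x-y)\,r_{n-1,n-k}((a+1)y)\,{\rm d}y$. Two elementary facts then evaluate everything. First, the re-centering identity $(a+1)L_{n}=L_{n-1}$ turns the argument $(a+1)y-L_{n-1}$ inside each hypothesis summand into $(a+1)(y-L_{n})$, so after the substitution $z=y-L_{n}$ every integrand is a function of $z$ evaluated at $(a+1)z$ — the same manipulation that produced \eqref{convo}. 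Second, the convolution-scaling rule
\begin{equation*}
(f_{0}\circledast\cdots\circledast f_{j})\big((a+1)z\big)=(a+1)^{j}\big(\tilde f_{0}\circledast\cdots\circledast\tilde f_{j}\big)(z),\qquad \tilde f_{i}(z):=f_{i}\big((a+1)z\big),
\end{equation*}
together with $\W_{(a+1)^{i}}\big((a+1)z\big)=\W_{(a+1)^{i+1}}(z)$, shows that convolving the outer $\W(x-y)$ against a summand $\circledast_{i=0}^{j}\W_{(a+1)^{i}}$ evaluated at $(a+1)(y-L_{n})$ produces, in the variable $x-L_{n}$, the summand $\circledast_{i=0}^{j+1}\W_{(a+1)^{i}}$ with an extra factor $(a+1)^{j}$; the final $\xi$-summand of the hypothesis is transported likewise to the $\xi$-summand at the next level.

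Applying this term by term, the convolution part carries the hypothesis summands $j=0,\dots,k-2$ to the summands $j=1,\dots,k-1$ of $r_{n,n-k}$ and the hypothesis $\xi$-term to the $j=k$ term, while the boundary part supplies exactly the missing $j=0$ term proportional to $\W(x-L_{n})$; the sign $(-1)^{j}$ is preserved because the factor $-\lambda$ from the convolution part shifts $(-1)^{j}$ to $(-1)^{j+1}$. Collecting coefficients yields \eqref{eq.rnk} together with the recursion $A_{j,n,n-k}=\lambda(a+1)^{\,j-1}A_{j-1,n-1,n-k}$ for $j\geq1$, with $A_{0,n,n-k}$ fixed by the boundary factor $\W(x-L_{n})/\W(L_{n-1}-L_{n})$, exactly as one reads off in the explicit cases $k=1,2$. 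The hard part will not be any single estimate but the bookkeeping of this transport: one must check that the index shifts and $(a+1)$-powers align so that each convolution type occurs exactly once after collecting, and in particular that the $\W(x-L_{n})$ contribution generated from the resolvent merges cleanly into a single, consistent recursion for $A_{0,n,n-k}$.
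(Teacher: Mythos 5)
Your proposal is correct and follows essentially the same route as the paper's own proof: induction on $k$ with base case $k=2$, the one-step renewal identity $r_{n,n-k}(x)=\lambda\int_{L_{n}}^{L_{n-1}}u^{(q+\lambda)}_{L_{n},L_{n-1}}(x,y)\,r_{n-1,n-k}((a+1)y)\,{\rm d}y$ justified via \eqref{eq.uq}, and the convolution--scaling rule used to transport each summand of the induction hypothesis one level up while the resolvent's boundary part supplies the $j=0$ term and fixes $A_{0,n,n-k}$. A minor point in your favour: your coefficient recursion $A_{j,n,n-k}=\lambda(a+1)^{j-1}A_{j-1,n-1,n-k}$ is exactly what the scaling rule for a $j$-fold convolution yields, whereas the paper's displayed recursion uses the constant factor $\lambda(a+1)$ for all $j\geq 2$, which appears to be a slip in the bookkeeping rather than a different argument.
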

\begin{proof}
	The proof is by induction on $k$. Clearly, (\ref{eq.rnk}) holds for $k=2$.
	 Assume it holds for $k-1\geq2$. 
	 By the induction hypothesis we have
	\begin{eqnarray}
		&&r_{n,n-(k-1)}(x)
		=\sum_{j=0}^{k-2}(-1)^{j}A_{j,n,n-(k-1)}
		\circledast_{i=0}^{j}\W_{(a+1)^{i}}(x-L_{n})
\label{eqeq1}
\\
		&&\qquad+(-1)^{k-1}A_{k-1,n,n-(k-1)}\circledast_{i=0}^{k-2}\W_{(a+1)^{i}}
		\circledast\xi_{n-(k-1),(a+1)^{k-1}}(x-L_{n}).
\nonumber
	\end{eqnarray}
Using (\ref{eq.uq}) and (\ref{eqeq1}), we have
\begin{eqnarray*}
	&&r_{n,n-k}(x)=\lambda\int_{L_{n}}^{L_{n-1}}u^{(q+\lambda)}_{L_{n},L_{n-1}}(x,y)	r_{n-1,n-k}((a+1)y){\rm d}y\\
	&&=\lambda \frac{W^{(q+\lambda)}(x-L_{n})}{\W (L_{n-1}-L_{n})}
	\int_{L_{n}}^{L_{n-1}}\W(L_{n-1}-y)\cdot\\
	&&\Bigg(\sum_{j=0}^{k-2}(-1)^{j}A_{j,n-1,n-k}\circledast_{i=0}^{j}\W_{(a+1)^{i}}((a+1)y-L_{n-1})\\
	&&\quad+(-1)^{k-1}A_{k-1,n-1,n-k}\circledast_{i=0}^{k-2}\W_{(a+1)^{i}}\circledast\xi_{n-k,(a+1)^{k-1}}((a+1)y-L_{n-1})\Bigg){\rm d}y\\
	&&	-\lambda\int_{L_{n}}^{x}\W(x-y)\cdot\\
	&&\Bigg(\sum_{j=0}^{k-2}(-1)^{j}A_{j,n-1,n-k}\circledast_{i=0}^{j}\W_{(a+1)^{i}}((a+1)y-L_{n-1})\\
	&&+(-1)^{k-1}A_{k-1,n-1,n-k}\circledast_{i=0}^{k-2}\W_{(a+1)^{i}}\circledast\xi_{n-k,(a+1)^{k-1}}((a+1)y-L_{n-1})\Bigg){\rm d}y.\\
\end{eqnarray*}
Note that (\ref{convo}) holds, and
and for $j\geq 1$,
\begin{eqnarray*}
	&&	\int_{L_{n}}^{x}\W(x-y)\circledast_{i=0}^{j}\W_{(a+1)^{i}}((a+1)y-L_{n-1}) {\rm d}y \\
	&&\qquad =(a+1)\circledast_{i=0}^{j+1}\W_{(a+1)^{i}}(x-L_{n}).
\end{eqnarray*}

If we choose
\begin{eqnarray}\label{eq.recursionA}
	&&A_{1,n,n-k}:=\lambda A_{0,n-1,n-k},\nonumber\\
	&&A_{j+1,n,n-k}:=\lambda (a+1)A_{j,n-1,n-k},\qquad 1\leq j\leq k-1,\nonumber\\
	&&A_{0,n,n-k}:=\frac{1}{\W(L_{n-1}-L_{n}))}\left(\sum_{j=1}^{k-1} (-1)^{j-1} A_{j
		,n,n-k}\circledast_{i=0}^{j}\W_{(a+1)^{i}}(L_{n-1}-L_{n})\right.\nonumber\\
	&&\qquad+\left. (-1)^{k-1} A_{k,n,n-k} \circledast_{i=0}^{k-1}\W_{(a+1)^{i}}\circledast \xi_{n-k,(a+1)^{k}}(L_{n-1}-L_{n})\right),
\end{eqnarray}
then (\ref{eq.rnk}) holds true which completes the proof.
\end{proof}
\noindent
{\em Step 2: The time until the first up-crossing of $L_{n-k}$}
\\
Let $L_{n}<x<L_{n-1}$ and $\tau_{L_{n-k}}^{+}$ be the first time that $U$ reaches a level in $\mathcal{N}$ and it is done  by up-crossing $L_{n-k}$ by the Brownian motion. Note that
\begin{equation}
\tau_{L_{n-k}}^{+}=\mathcal{E}_{1,\lambda}1_{\mathcal{E}_{1,\lambda}<u^X_{n-1}\wedge d^X_{n}}+\mathcal{E}_{2,\lambda}1_{\mathcal{E}_{2,\lambda}<u^X_{n-2}\wedge d^X_{n-1}}+...+\mathcal{E}_{k-1,\lambda}1_{\mathcal{E}_{k-1,\lambda}<u^X_{n-k+1}\wedge d^X_{n+2-k}}+u^X_{n-k}1_{u^X_{n-k}<\mathcal{E}_{k
		,\lambda}\wedge d^X_{n-k+1}}.
\end{equation}
For $k=1,...,n$	we define
\begin{eqnarray*}
\lefteqn{\omega_{n,n-k}(x):=\E_{x}[e^{-q\tau_{L_{n-k}}^{+}}]}\\&&=
\E_{x}\left[e^{-q(\sum_{j=1}^{k-1}\mathcal{E}_{j,\lambda}+u^X_{n-k})}1_{\mathcal{E}_{1,\lambda}<u^X_{n-1}\wedge d^X_{n}}...1_{\mathcal{E}_{k-1,\lambda}<u^X_{n-k+1}\wedge d^X_{n+2-k}}1_{u^X_{n-k}<d^X_{n-k+1}\wedge\mathcal{E}_{k,\lambda} }\right].
\end{eqnarray*}
Applying (\ref{eq.W}) we have
\begin{equation}
\Omega_{n-1}(x-L_{n}):=
\omega_{n,n-1}(x)=\E_{x}(e^{-q\tau_{L_{n-1}}^{+}}1_{u^X_{n-1}<\mathcal{E}_{1,\lambda}\wedge d^X_{n}})=\frac{\W(x-L_{n})}{\W(L_{n-1}-L_{n})}.
\label{Omegan-1}
\end{equation}
Further, using (\ref{eq.uq}) and (\ref{Omegan-1}), observe that
\begin{eqnarray*}
	&&	\omega_{n,n-2}(x)=\lambda\int_{L_{n}}^{L_{n-1}}u^{(q+\lambda)}_{L_{n},L_{n-1}}(x,y)\omega_{n-1,n-2}((a+1)y){\rm d}y\\
	&&=\lambda \frac{W^{(q+\lambda)}(x-L_{n})}{\W (L_{n-1}-L_{n})}\int_{L_{n}}^{L_{n-1}}\W(L_{n-1}-y)\Omega_{n-2}((a+1)y-L_{n-1}){\rm d}y\nonumber\\
	&&\qquad-\lambda\int_{L_{n}}^{x}\W(x-y)\Omega_{n-2}((a+1)y-L_{n-1}){\rm d}y\nonumber\\
	&&=\lambda \frac{W^{(q+\lambda)}(x-L_{n})}{\W (L_{n-1}-L_{n})}\W\circledast\Omega_{n-2,a+1}(L_{n-1}-L_{n})\\
	&&\qquad-\lambda \W\circledast\Omega_{n-2,a+1}(x-L_{n}),
\end{eqnarray*}
where $\Omega_{n,(a+1)^k}(x)=\Omega_{n}((a+1)^kx)$, $k\in \mathbb{N}$.
Let
\begin{eqnarray}
&&B_{1,n,n-2}:=\lambda\quad\text{and}\quad
B_{0,n,n-2}:=\lambda\frac{\W\circledast\Omega_{n-2,a+1}(L_{n-1}-L_{n})}{\W (L_{n-1}-L_{n})}.
\end{eqnarray}
Then
\begin{eqnarray}
\omega_{n,n-2}(x)=B_{0,n,n-2}\W(x-L_{n})-B_{1,n,n-2}\W\circledast\Omega_{n-2,a+1}(x-L_{n}).
\end{eqnarray}
The next proposition gives a general expression for $\omega_{n,n-k}(x)$.
\begin{proposition}\label{prop.rho}
For $k =2,\dots,n$ we have
\begin{eqnarray}\label{eq.wnk}
&&\omega_{n,n-k}(x)
=\sum_{j=0}^{k-2}(-1)^{j}B_{j,n,n-k}
\circledast_{i=0}^{j}\W_{(a+1)^{i}}(x-L_{n})\nonumber\\
&&\qquad+(-1)^{k-1}B_{k-1,n,n-k}\circledast_{i=0}^{k-2}\W_{(a+1)^{i}}
\circledast \Omega_{ n-k,(a+1)^{k-1}}(x-L_{n}),
\end{eqnarray}
where $B_{j,n,n-k}$, $j=0,...,k-1$ are coefficients which are obtained recursively.
\end{proposition}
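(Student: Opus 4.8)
The plan is to establish \eqref{eq.wnk} by induction on $k$, following line by line the template of the proof of Proposition~\ref{prop6.1} for the down-crossing quantities $r_{n,n-k}$. The base case $k=2$ is already settled by the explicit computation of $\omega_{n,n-2}(x)$ carried out just before the statement, which produces the two coefficients $B_{0,n,n-2}$ and $B_{1,n,n-2}$ and matches \eqref{eq.wnk} once one reads the empty-sum and single-convolution conventions. For the inductive step I would assume that \eqref{eq.wnk} holds at depth $k-1\ge 2$ for all admissible index pairs, and in particular for $\omega_{n-1,n-k}$ (which has depth $k-1$).

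The engine of the induction is the one-step potential recursion supplied by the occupation density \eqref{eq.uq}. Conditioning on the first $\lambda$-clock that rings while the Brownian motion $X$ remains inside $(L_{n},L_{n-1})$, and using the strong Markov property together with the proportional jump $y\mapsto (a+1)y$ that carries the process from $(L_{n},L_{n-1})$ into $(L_{n-1},L_{n-2})$, gives
\[
\omega_{n,n-k}(x)=\lambda\int_{L_{n}}^{L_{n-1}}u^{(q+\lambda)}_{L_{n},L_{n-1}}(x,y)\,\omega_{n-1,n-k}((a+1)y)\,{\rm d}y .
\]
I would substitute the depth-$(k-1)$ formula for $\omega_{n-1,n-k}$ into this integral and split the kernel into its two summands from \eqref{eq.uq}: the factorised part $\frac{\W(x-L_{n})}{\W(L_{n-1}-L_{n})}\W(L_{n-1}-y)$, whose $y$-integral yields a scalar multiple of $\W(x-L_{n})$, and the diagonal part $-\W(x-y)$, whose $y$-integral becomes a convolution after the substitution $z=x-y$.

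The remaining manipulations reduce entirely to the convolution identities already recorded for the down-crossing case, namely \eqref{convo} and its generalisation
\[
\int_{L_{n}}^{x}\W(x-y)\circledast_{i=0}^{j}\W_{(a+1)^{i}}((a+1)y-L_{n-1})\,{\rm d}y=(a+1)\circledast_{i=0}^{j+1}\W_{(a+1)^{i}}(x-L_{n}),\qquad j\ge 1,
\]
together with the analogous identity for the terminal $\Omega$-string. Each application lengthens a convolution chain by one factor $\W_{(a+1)^{j+1}}$ and multiplies it by $\lambda(a+1)$, which is precisely the mechanism encoded in the coefficient recursion. Mirroring \eqref{eq.recursionA}, I would therefore set
\[
B_{1,n,n-k}:=\lambda B_{0,n-1,n-k},\qquad B_{j+1,n,n-k}:=\lambda(a+1)B_{j,n-1,n-k}\ \ (1\le j\le k-2),
\]
and fix the remaining coefficient $B_{0,n,n-k}$ by collecting all contributions to $\W(x-L_{n})$ coming from the factorised part (i.e.\ the chains evaluated at $L_{n-1}-L_{n}$) and dividing by $\W(L_{n-1}-L_{n})$. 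With these choices the right-hand side collapses exactly to \eqref{eq.wnk} at depth $k$, closing the induction.

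The step I expect to be the main obstacle is one of bookkeeping rather than analysis: one must verify that substituting the depth-$(k-1)$ hypothesis produces a factorised sum running over $j=0,\dots,k-2$ and a terminal term carrying $\Omega_{n-k,(a+1)^{k-1}}$, that is, one convolution factor fewer than in the $r_{n,n-k}$ expansion. This shift reflects that the last event here is an up-crossing governed by \eqref{eq.W} rather than a down-crossing governed by \eqref{2}, and it is what makes the $\omega$-recursion indexed by $B_{j,n,n-k}$, $j=0,\dots,k-1$, instead of $j=0,\dots,k$. One must also check that the implicit changes of variable $y_{j}=t_{j}/(a+1)^{j-1}$ underlying the convolution identities are applied consistently across every string. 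Once the indexing is aligned with that of Proposition~\ref{prop6.1}, the recursion for the $B$-coefficients is forced and \eqref{eq.wnk} follows.
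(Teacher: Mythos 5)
Your proposal is correct and follows essentially the same route as the paper: induction on $k$ with base case $k=2$ from the explicit computation of $\omega_{n,n-2}$, the one-step potential recursion $\omega_{n,n-k}(x)=\lambda\int_{L_n}^{L_{n-1}}u^{(q+\lambda)}_{L_n,L_{n-1}}(x,y)\,\omega_{n-1,n-k}((a+1)y)\,{\rm d}y$, splitting the kernel, and the same coefficient recursion $B_{1,n,n-k}=\lambda B_{0,n-1,n-k}$, $B_{j+1,n,n-k}=\lambda(a+1)B_{j,n-1,n-k}$ with $B_{0,n,n-k}$ fixed by the factorised part evaluated at $L_{n-1}-L_n$. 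Your remark about the index shift (sum to $k-2$ and terminal $\Omega$-string, reflecting that the final event is an up-crossing via \eqref{eq.W} rather than a down-crossing via \eqref{2}) is exactly the bookkeeping point that distinguishes this proposition from Proposition~\ref{prop6.1}.
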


\begin{proof}
The proof is similar to the proof of Proposition \ref{prop6.1}.
The proposition clearly holds for $k=2$. Assume it holds for $k>2$ and $k-1$.
Applying (\ref{eq.uq}) we obtain that
\begin{eqnarray*}
	&&\omega_{n,n-k}(x)=\lambda\int_{L_{n}}^{L_{n-1}}u^{(q+\lambda)}_{L_{n},L_{n-1}}(x,y) \omega_{n-1,n-k}((a+1)y){\rm d}y\\
	&&=\lambda \frac{W^{(q+\lambda)}(x-L_{n})}{\W (L_{n-1}-L_{n})}\int_{L_{n}}^{L_{n-1}}\W(L_{n-1}-y)\cdot\\
	&&\quad\Bigg(	\sum_{j=0}^{k-3}(-1)^{j}B_{j,n-1,n-k}
	\circledast_{i=0}^{j}\W_{(a+1)^{i}}((a+1)y-L_{n-1})\\
	&&\qquad+(-1)^{k-2}B_{k-2,n-1,n-k}\circledast_{i=0}^{k-3}\W_{(a+1)^{i}}
	\circledast\Omega_{ n-k,(a+1)^{k-2}}((a+1)y-L_{n-1})\Bigg){\rm d}y\\
	&&\quad-\lambda\int_{L_{n}}^{x}\W(x-y)\cdot\\
	&&	\Bigg(\sum_{j=0}^{k-3}(-1)^{j}B_{j,n-1,n-k}
	\circledast_{i=0}^{j}\W_{(a+1)^{i}}((a+1)y-L_{n-1})\\
	&&\qquad+(-1)^{k-2}B_{k-2,n-1,n-k}\circledast_{i=0}^{k-3}\W_{(a+1)^{i}}
	\circledast\Omega_{n-k,(a+1)^{k-2}}((a+1)y-L_{n-1})\Bigg){\rm d}y.
\end{eqnarray*}
Taking
\begin{eqnarray}
&&B_{1,n,n-k}:=\lambda B_{0,n-1,n-k} , \quad
B_{j+1,n,n-k}:=(a+1)\lambda B_{j,n-1,n-k},\quad j=1,...,k-2,
\nonumber
\\
&&B_{0,n,n-k}:=\frac{1}{\W(L_{n-1}-L_{n})}\cdot
\nonumber
\\
&&\quad\Bigg(\sum_{j=1}^{k-1}(-1)^{j}B_{j,n ,n-k}\circledast_{i=0}^{j}\W_{(a+1)^{i}}(L_{n-1}-L_{n})
\nonumber
\\
&&+(-1)^{k-1} B_{k-1,n,n-k} \circledast_{i=0}^{k-2}\W_{(a+1)^{i}}\circledast\Omega_{n-k,(a+1)^{k-1}}(L_{n-1}-L_{n})\Bigg) ,
\end{eqnarray}
completes the proof of this proposition.
\end{proof}
\noindent
{\em Step 3: Determination of the exit/ruin time transform $\rho_N(x)$}
\\
To find $\rho_{N}(x)$ we start from the key observation that for $L_{n}<x<L_{n-1}$ we have
\begin{equation}\label{keyidentity}
\rho_{N}(x)=r_{n,n}(x)\rho_{n}+\sum_{j=1}^{n-1}(r_{n,n-j}(x)+\omega_{n,n-j}(x))\rho_{n-j},
\end{equation}
where
  \[\rho_{n}:=E_{L_{n}}\left[e^{-qd_N}1_{d_N<u_0}\right].\]
In the next step we construct a system of linear equations to find $\rho_{n},\,n=1,2,...,N$.
Clearly, $\rho_{0}=0$ and $\rho_{N}=1$.
Moreover,
\begin{eqnarray*}
	&&\rho_{1}=
	\left(Z^{(q+\lambda)}(L_{1}-L_{2})-\frac{\W (L_{1}-L_{2})}{\W(L_{0}-L_{2})}Z^{(q+\lambda)}(L_{0}-L_{2})\right)\rho_{2}\\
	&&\qquad+ \left(\lambda \int_{L_{2}}^{L_{1}}u^{(q+\lambda)}_{L_{2},L_{0}}(L_{1},y))r_{1,1}((a+1)y){\rm d}y \right) \rho_{1}. \\
\end{eqnarray*}
The  term in the first parentheses is the Laplace transform of the time to down-cross $L_{2}$ before $\mathcal{E}_{\lambda}$ and before $L_{0}$ is reached; cf.\ (\ref{2}).
	The second term is the Laplace transform of    $\mathcal{E}_{\lambda}$ where the exponential time expires when $U\in (y,y+dy)$ is between $L_{2}$ and $L_{1}$ before reaching $L_{2}$ or $L_{0}$ and then the time to reach $L_{1}$ from above. Similarly, note that
\begin{eqnarray}
	&&\rho_{2}=
	\left(Z^{(q+\lambda)}(L_{2}-L_{3})-\frac{\W (L_{2}-L_{3})}{\W(L_{1}-L_{3})}Z^{(q+\lambda)}(L_{1}-L_{3})\right)\rho_{3}\label{rho2d}\\
	&&\qquad+\frac{\W(L_{2}-L_{3})}{\W(L_{1}-L_{3})}\rho_{1}\label{rho2u}\\
&&\qquad+\lambda\int_{L_{3}}^{L_{2}}u^{(q+\lambda)}_{L_{3},L_{1}}(L_{2},y)\left(r_{2,2}((a+1)y)	\rho_{2}+(r_{2,1}((a+1)y)+\omega_{2,1}((a+1)y))\rho_{1}\right){\rm d}y\label{rho2s2}\\
&&\qquad+ \left( \lambda\int_{L_{2}}^{L_{1}}u^{(q+\lambda)}_{L_{3},L_{1}}(L_{2},y)r_{1,1}((a+1)y){\rm d}y \right) \rho_1.\label{rho2s1}
\end{eqnarray}
The term in the parentheses in (\ref{rho2d}) is the expected discounted time to reach $L_{3}$ before a jump and before up-crossing $L_{1}$. The factor in (\ref{rho2u}) is the expected discounted time  to reach $L_{1}$
before a jump and before down-crossing $L_{3}$ (cf.\ (\ref{eq.W})). (\ref{rho2s2}) and (\ref{rho2s1}) describe the expected discounted time until a jump when a jump occurs before   reaching $L_{1}$ or $L_{3}$ and then the expected discounted time until the process reaches one of the levels $L_{j}$ for  $j\leq 2$.
  (\ref{rho2s2}) describes the case where just before a jump $U$ is between $L_{3}$ and $L_{2}$ and( \ref{rho2s1}) describes the case where just before a jump $U$ is between $L_{2}$ and $L_{1}$.
 By rearranging (\ref{rho2d})-(\ref{rho2s1}) we get
\begin{eqnarray*}
	&&\rho_{2}=
\left(Z^{(q+\lambda)}(L_{2}-L_{3})-\frac{\W (L_{2}-L_{3})}{\W(L_{1}-L_{3})}Z^{(q+\lambda)}(L_{1}-L_{3})\right)\rho_{3}\\\
&&+\left( \lambda\int_{L_{3}}^{L_{2}}u^{(q+\lambda)}_{L_{3},L_{1}}(L_{2},y)r_{2,2}((a+1)y){\rm d}y \right) \rho_{2}\\
&&+\left(\frac{\W(L_{2}-L_{3})}{\W(L_{1}-L_{3})}+\lambda \int_{L_{3}}^{L_{2}}u^{(q+\lambda)}_{L_{3},L_{1}}(L_{2},y)(r_{2,1}((a+1)y)+
\omega_{2,1}((a+1)y){\rm d}y\right.\\
		&&+\left. \lambda \int_{L_{2}}^{L_{1}}u^{(q+\lambda)}_{L_{3},L_{1}}(L_{2},y)r_{1,1}((a+1)y){\rm d}y\right)\rho_{1}.
\end{eqnarray*}
Using similar arguments, we can show that generally, for $1<n\leq N-1$,
\begin{eqnarray*}
	&&	\rho_{n}=\left(Z^{(q+\lambda)}(L_{n}-L_{n+1})-\frac{\W (L_{n}-L_{n+1})}{\W(L_{n-1}-L_{n+1})}Z^{(q+\lambda)}(L_{n-1}-L_{n+1})\right)\rho_{n+1}\\
	&&\qquad+\frac{\W(L_{n}-L_{n+1})}{\W(L_{n-1}-L_{n+1})}\rho_{n-1}\\
	&&\qquad+\lambda\int_{L_{n+1}}^{L_{n}}u^{(q+\lambda)}_{L_{n+1},L_{n-1}}(L_{n},y)\left(r_{n,n}((a+1)y)\rho_{n}+\sum_{k=1}^{n-1}
	\left( r_{n,n-k}((a+1)y)+\omega_{n,n-k}((a+1)y) \right) \rho_{n-k}\right){\rm d}y\\
&&\qquad+\lambda\int_{L_{n}}^{L_{n-1}}u^{(q+\lambda)}_{L_{n+1},L_{n-1}}(L_{n},y)\Bigg(r_{n-1,n-1}((a+1)y)\rho_{n-1}\\
&&\qquad\qquad+\sum_{k=1}^{n-2}
	(r_{n-1,n-1-k}((a+1)y)+\omega_{n-1,n-1-k}((a+1)y)\rho_{n-1-k}\Bigg){\rm d}y ,
	\end{eqnarray*}
which is equivalent to
\begin{eqnarray}
&&\rho_{n}=\left(Z^{(q+\lambda)}(L_{n}-L_{n+1})-\frac{\W (L_{n}-L_{n+1})}{\W(L_{n-1}-L_{n+1})}Z^{(q+\lambda)}(L_{n-1}-L_{n+1})\right)\rho_{n+1}\nonumber\\
&&\quad+ \left( \lambda\int_{L_{n+1}}^{L_{n}}u_{L_{n+1},L_{n-1}}^{(q+\lambda)}(L_{n},y)r_{n,n}((a+1)y){\rm d}y \right) \rho_{n}\nonumber\\
&&\quad+\left(\lambda \int_{L_{n+1}}^{L_{n}}u^{(q+\lambda)}_{L_{n+1},L_{n-1}}(L_{n},y)(r_{n,n-1}((a+1)y)+\omega_{n,n-1}((a+1)y)){\rm d}y\right.\nonumber\\
&&\quad+\left.\frac{\W(L_{n}-L_{n+1})}{\W(L_{n-1}-L_{n+1})}+\lambda \int_{L_{n}}^{L_{n-1}}u^{(q+\lambda)}_{L_{n+1},L_{n-1}}(L_{n},y)r_{n-1,n-1}((a+1)y){\rm d}y\right)\rho_{n-1}\nonumber\\
&&\quad+\lambda\sum_{k=2}^{n-1}\left(\int_{L_{n+1}}^{L_{n}}u^{(q+\lambda)}_{L_{n+1},L_{n-1}}(L_{n},y)(r_{n,n-k}((a+1)y)+\omega_{n,n-k}((a+1)y)){\rm d}y\right.\nonumber\\
&&\quad+\left.\int_{L_{n}}^{L_{n-1}}u^{(q+\lambda)}_{L_{n+1},L_{n-1}}(L_{n},y)(r_{n-1,n-1-(k-1)}((a+1)y)+\omega_{n-1,n-1-(k-1)}((a+1)y)){\rm d}y\right)\rho_{n-k}.\nonumber\\
\label{systemforrho}
\end{eqnarray}
Thus we have proved the following main result.
\begin{theorem}
The two-sided downward exit time transform $\rho_{N}(x)$ defined in
\eqref{rhoonceagain} is given in \eqref{keyidentity} with $r_{n,n-k}$
identified in (\ref{rnnx}), (\ref{rnn-1x}) and Proposition \ref{prop6.1}, $\omega_{n,n-k}$ identified in (\ref{Omegan-1}) and Proposition \ref{prop.rho}, and
with $\rho_k$ given via the system of equations \eqref{systemforrho}.
\end{theorem}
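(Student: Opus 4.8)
The plan is to assemble the theorem from three independently established pieces: the master decomposition \eqref{keyidentity}, the coefficient families $r_{n,n-k}$ and $\omega_{n,n-k}$ supplied by Propositions~\ref{prop6.1} and~\ref{prop.rho}, and the linear system \eqref{systemforrho} for the boundary values $\rho_1,\dots,\rho_N$. The conceptual backbone is a single structural observation: a proportional jump from a point $y\in(L_n,L_{n-1})$ lands at $(a+1)y\in(L_{n-1},L_{n-2})$, one band higher and strictly above $L_{n-1}$. Hence a jump never lands on a level of $\mathcal{N}$, and every level of $\mathcal{N}$ is reached continuously, by the Brownian component, either from above (a down-crossing) or from below (an up-crossing). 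This is what makes the strong Markov decomposition clean.

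First I would prove \eqref{keyidentity}. Fix $x\in(L_n,L_{n-1})$ and apply the strong Markov property at the first time the process hits $\mathcal{N}$. By the structural observation that first hit is at some level $L_{n-j}$, $j=0,\dots,n-1$: it cannot be $L_{n+1}$ or lower, since any such level is reached only after down-crossing $L_n\in\mathcal{N}$, and it cannot be $L_0=b$ on the event $d_N<u_0$. For $j=0$ the level $L_n$ can only be reached from above, contributing $r_{n,n}(x)\rho_n$; for $j\ge 1$ the level $L_{n-j}$ may be reached from above or below, contributing $(r_{n,n-j}(x)+\omega_{n,n-j}(x))\rho_{n-j}$, where $\rho_{n-j}=\E_{L_{n-j}}[e^{-qd_N}1_{d_N<u_0}]$ again by the strong Markov property. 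Summing over these disjoint, exhaustive cases yields \eqref{keyidentity}.

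Next I would justify the coefficients. Each of $r_{n,n-k}$ and $\omega_{n,n-k}$ is built by conditioning on the first jump clock $\mathcal{E}_\lambda$: the event ``no jump before the Brownian motion exits the current one-band window'' carries the extra killing factor $e^{-\lambda t}$, so every Brownian exit functional becomes its $(q+\lambda)$-discounted version and the three fluctuation identities \eqref{eq.W}, \eqref{2}, \eqref{eq.uq} apply verbatim on each window with exponent $q+\lambda$. Reaching $L_{n-k}$ by down-crossing requires $k$ successive jumps, each moving the process up one band via the resolvent density $u^{(q+\lambda)}_{L_n,L_{n-1}}$ evaluated after the $(a+1)$-scaling at $(a+1)y$, followed by a down-crossing encoded through $\xi_{n-k}$; the up-crossing quantity is the same with $k-1$ jumps and a final up-crossing encoded through $\Omega_{n-k}$. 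The change of variables $y_j=t_j/(a+1)^{j-1}$ converts the nested window integrals into the scaled convolutions $\circledast_i\W_{(a+1)^i}$, and the induction on $k$ is precisely Propositions~\ref{prop6.1} and~\ref{prop.rho}, together with the stated recursions for $A_{j,n,n-k}$ and $B_{j,n,n-k}$.

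Finally I would close the system for $\rho_1,\dots,\rho_{N-1}$, with $\rho_0=0$ and $\rho_N=1$. Starting exactly at $L_n$, a first-transition analysis on the two-band window $(L_{n+1},L_{n-1})$ distinguishes three possibilities: down-crossing $L_{n+1}$ before a jump and before up-crossing $L_{n-1}$ (the coefficient of $\rho_{n+1}$, from \eqref{2}); up-crossing $L_{n-1}$ before a jump and before down-crossing $L_{n+1}$ (the coefficient of $\rho_{n-1}$, from \eqref{eq.W}); or a jump firing while the process sits at some $y$ in the window (resolvent density \eqref{eq.uq}), after which it restarts at $(a+1)y$ one band higher and the already-computed $r,\omega$ transforms carry it to each $L_{n-k}$. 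Collecting the coefficient of every $\rho_{n-k}$ produces \eqref{systemforrho}, a finite linear system whose solution, substituted back into \eqref{keyidentity}, furnishes $\rho_N(x)$ for all $x$. The main obstacle is the bookkeeping: verifying that the decompositions are genuinely disjoint and exhaustive, that no level is ever reached by a jump, and that the repeated $(a+1)$-scaling is tracked correctly through every convolution and every induction step; a secondary subtlety is the systematic conversion of the jump-clock indicators into the extra killing rate $\lambda$, which is exactly what legitimizes invoking \eqref{eq.W}, \eqref{2}, \eqref{eq.uq} for $X$ with exponent $q+\lambda$ on each window.
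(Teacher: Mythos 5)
Your proposal is correct and follows essentially the same route as the paper: the strong Markov decomposition over the first level of $\mathcal{N}$ reached (from above via $r_{n,n-k}$, from below via $\omega_{n,n-k}$, with the $L_0$-contribution vanishing since $\rho_0=0$), the construction of these partial transforms by treating the jump clock as killing at rate $\lambda$ so that the exit identities \eqref{eq.W}, \eqref{2}, \eqref{eq.uq} apply with exponent $q+\lambda$ on each one-band window, and the closing linear system obtained by first-transition analysis on the two-band window $(L_{n+1},L_{n-1})$ around each $L_n$. The structural observation that a proportional jump moves the process exactly one band up without touching a level of $\mathcal{N}$ is indeed the key fact the paper relies on, and your bookkeeping of the $(a+1)$-scalings matches the paper's change of variables.
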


\subsection{Expected discounted dividends until ruin}
\label{Bsec5.2}
In this section we obtain $v_{N}(x)$ -- the expected discounted dividends obtained until the process reaches $L_{N}$ starting at $x$.
%
Let $L_{n}<x<L_{n-1}$ and
\begin{equation}
\mathcal{T}_{n,0}(x):=\E_{x}[e^{-q\mathcal{S}_{n}}1_{\mathcal{E}_{1,\lambda}<u^X_{n-1}\wedge d^X_{n}}1_{\mathcal{E}_{2,\lambda}<u^X_{n-2}\wedge d^X_{n-1}}...1_{\mathcal{E}_{n,\lambda}<u^X_{0}\wedge d^X_{1}} ],
\end{equation}
where
\[\mathcal{S}_{n}:=\sum_{i=1}^{n}\mathcal{E}_{i,\lambda}.\]
Thus $\mathcal{T}_{n,0}(x)$ is the expected discounted time until up-crossing $L_0$ by a jump when it occurs before reaching any level in $\mathcal{N}$.
Also for $L_{n}<x<L_{n-1}$ let
\begin{equation}
v_{n}^{J}(x):=\E_{x}[e^{-qS_n}1_{\mathcal{E}_{1,\lambda}<u^X_{n-1}\wedge d^X_{n}}1_{\mathcal{E}_{2,\lambda}< u_{n-2}^X \wedge d^X_{n-1}}...1_{\mathcal{E}_{n,\lambda} < u_0^X \wedge d^X_{1}}((a+1)U(\mathcal{S}_{n})- L_0) ].
\label{vnJdef}
\end{equation}
Note that $v_{n}^{J}(x)$ is the expected discounted overflow above $L_0=b$ when it occurs before reaching any level in $\mathcal{N}$.
First consider $v_1^J(x)$, so take $L_{1}<x<L_{0}$. Applying (\ref{eq.uq}) we get
\begin{eqnarray}
&&v_{1}^{J}(x)=\lambda\int_{L_{1}}^{L_{0}}u^{(q+\lambda)}_{L_{1},L_{0}}(x,y)((a+1)y- L_0){\rm d}y
\label{5.27}
\\
&&=\lambda\left(\frac{\W(x-L_{1})}{\W(L_{0}-L_{1})}\int_{L_{1}}^{L_0}\W(L_{0}-y)((a+1)y- L_0){\rm d}y-\int_{L_{1}}^{x}\W(x-y)((a+1)y- L_0){\rm d}y\right).	
\nonumber
\end{eqnarray}
Thus,
\begin{equation*}
v_{1}^{J}(x)=A_{1,f,0}\W(x-L_{1})-A_{1,f,1}\W\circledast\mathcal{Q}_{a+1}(x-L_{1}) ,
\end{equation*}
where $\mathcal{Q}_{(a+1)^k}(x)=\mathcal{Q}((a+1)^kx)=(a+1)^kx$, $k\in \mathbb{N}$, and
\begin{eqnarray}
&&A_{1,f,0}:=\lambda\frac{\W\circledast\mathcal{Q}_{a+1}(L_{0}-L_{1})}{\W(L_{0}-L_{1})}\quad\text{and}\quad
A_{1,f,1}:=\lambda.
\end{eqnarray}
Similarly, replacing $(a+1)y- L_0$ by $1$ in (\ref{5.27}), we obtain that the expected discounted time until a jump above $L_0=b$ is:
\begin{equation}
\mathcal{T}_{1,0}(x)=A_{1,\mathcal{T},0}\W(x-L_{1})-A_{1,\mathcal{T},1}\overline{W}^{(q+\lambda)}(x-L_{1}),
\end{equation}
where
\begin{equation}
A_{1,\mathcal{T},0}=\lambda\frac{\overline{W}^{(q+\lambda)}(L_{0}-L_{1})}{\W(L_{0}-L_{1})} \quad \text{and} \quad
A_{1,\mathcal{T},1}=\lambda, \label{eq.at2}
\end{equation}
and $\overline{W}^{(q)}(x)=\int_{0}^{x}W^{(q)}(y)dy$.

Next consider $v_2^J(x)$, so take $L_{2}<x<L_{1}$. Then
\begin{eqnarray*}
&&v^{J}_{2}(x)=\lambda\int_{L_{2}}^{L_{1}}u^{(q+\lambda)}_{L_{2},L_{1}}(x,y)v_{1}^{J}((a+1)y){\rm d}y\\
&&=\lambda\frac{\W(x-L_{2})}{\W(L_{1}-L_{2})}\int_{L_{2}}^{L_{1}}\W(L_{1}-y)\left(A_{1,f,0}\W((a+1)y-L_{1})\right.\\
&&\left.\qquad-A_{1,f,1}\W\circledast\mathcal{Q}_{a+1}((a+1)y-L_{1})\right){\rm d}y\\
&&\qquad-\lambda \int_{L_{2}}^{x}\W(x-y)\left(A_{1,f,0}\W((a+1)y-L_{1})\right.\\
&&\left.\qquad-A_{1,f,1}\W\circledast\mathcal{Q}_{a+1}((a+1)y-L_{1})\right){\rm d}y.\\
\end{eqnarray*}
Thus,
\begin{equation}
v^{J}_{2}(x)=A_{2,f,0}\W(x-L_{2})-A_{2,f,1}\W\circledast\W_{a+1}(x-L_{2})+A_{2,f,2}\W\circledast\W_{a+1}\circledast\mathcal{Q}_{(a+1)^{2}}(x-L_{2}),
\end{equation}
where
\begin{eqnarray}
&&A_{2,f, 1}:=\lambda A_{1,f,0}, \quad  \quad  A_{2,f,2}:=\lambda (a+1) A_{1,f,1}\nonumber\\
&&A_{2,f,0}:=\frac{\lambda(A_{1,f,0}\W\circledast\W_{a+1}(L_{1}-L_{2})-A_{1,f,1}(a+1)\W\circledast\W_{a+1}\circledast\mathcal{Q}_{(a+1)^{2}}(L_{1}-L_{2}))}{\W(L_{1}-L_{2})}
\nonumber\\
&&=\frac{A_{2,f,1}\W\circledast\W_{a+1}(L_{1}-L_{2})-A_{2,f,2}\W\circledast\W_{a+1}\circledast\mathcal{Q}_{(a+1)^{2}}(L_{1}-L_{2})}{\W(L_{1}-L_{2})}
\label{A2f1}
\end{eqnarray}
Similarly,
\begin{equation}
\mathcal{T}_{2,0}(x)=A_{2,\mathcal{T},0}\W(x-L_{2})-A_{2,\mathcal{T},1}\W\circledast\W_{a+1}(x-L_{2})+A_{2,\mathcal{T},2}\W\circledast\overline{W}_{a+1}(x-L_{2}),
\end{equation}	
where
\begin{eqnarray}
&&A_{2,\mathcal{T},1}:=\lambda A_{1,\mathcal{T},0}  \quad \text{and} \quad
A_{2,\mathcal{T},2}:=\lambda A_{1,\mathcal{T},1},\label{eq.at22}\\
&&A_{2,\mathcal{T},0}:=\frac
{ A_{2,\mathcal{T},1}\W \circledast \W_{a+1}(L_{1}-L_{2})-A_{2,\mathcal{T},2} \W \circledast\overline{W}^{(q+\lambda)}_{a+1}(L_{1}-L_{2})}{\W(L_{1}-L_{2})}. \label{eq.at20}
\end{eqnarray}
Using similar arguments like in the proof of Proposition \ref{prop.rho}
one can derive the following result.
\begin{proposition}\label{vjn}
For $L_{n}<x<L_{n-1}$ we have
	\begin{eqnarray*}
	&&v^{J}_{n}(x)=\sum_{j=0}^{n-1}(-1)^{j}A_{n,f,j}\circledast_{i=0}^{j}\W_{(a+1)^{i}}(x-L_{n})+(-1)^{n} A_{n,f,n} \circledast_{i=0}^{n-1} \W_{(a+1)^{i}}\circledast \mathcal{Q}_{(a+1)^{n}}(x-L_{n}),
	\end{eqnarray*}
where $A_{n,f,j}$ are obtained recursively as follows:
	\begin{eqnarray}
	&&A_{n,f,1}:=\lambda 	A_{n-1,f,0},\\
&&A_{n,f,j}:=\lambda(a+1)A_{n-1,f,j-1},\,\,\,2\leq j\leq n,\\
&&A_{n,f,0}:=\frac{\sum_{j=1}^{n-1}(-1)^{j-1}A_{n ,f,j}\circledast_{i=0}^{j}\W_{(a+1)^{i}}(L_{n-1}-L_{n})}{\W(L_{n-1}-L_{n})}
\nonumber
\\
&&\qquad\qquad
+(-1)^{n-1}\frac{A_{n ,f,n}\circledast_{i=0}^{n-1}\W_{(a+1)^{i}}\circledast\mathcal{Q}_{(a+1)^{n}}(L_{n-1}-L_{n})}{\W(L_{n-1}-L_{n})}.
	\end{eqnarray}
Similarly, for $L_{n}<x<L_{n-1}$,
\begin{eqnarray*}
&&\mathcal{T}_{n,0}(x)=\sum_{j=0}^{n-1}(-1)^{j}A_{n,\mathcal{T},j}\circledast_{i=0}^{j-1}\W_{(a+1)^{i}}(x-L_{n})
+(-1)^{n} A_{n,\mathcal{T},n} \circledast_{i=0}^{n-1}\W_{(a+1)^{i}}\circledast\overline{W}^{(q+\lambda)}_{(a+1)^{n}}(x-L_{n}),
\end{eqnarray*}
where
$A_{n,\mathcal{T},j}$, $n=1,2$, $j=0,1,2$ are as in \eqref{eq.at2} and \eqref{eq.at20}- \eqref{eq.at22}.
For $n \geq 3$, $A_{n,\mathcal{T},j}$ are obtained recursively as follows:
\begin{eqnarray}
	&&A_{n,\mathcal{T},1}:=\lambda 	A_{n-1,\mathcal{T},0},\\
	&&A_{n,\mathcal{T},j}:=\lambda(a+1)A_{n-1,\mathcal{T},j-1},\,\,\,2\leq j\leq n, \\
&&A_{n,\mathcal{T},0}:=\frac{\sum_{j=1}^{n-1}(-1)^{j-1}A_{n,\mathcal{T},j}\circledast_{i=0}^{j}\W_{(a+1)^{i}}(L_{n-1}-L_{n})}{\W(L_{n-1}-L_{n})}
\nonumber
\\&&
\qquad\qquad
+(-1)^{n-1}\frac{A_{n,\mathcal{T},n}\circledast_{i=0}^{n-1}\W_{(a+1)^{i}}\circledast\overline{W}^{(q+\lambda)}_{(a+1)^{n}}(L_{n-1}-L_{n})}{\W(L_{n-1}-L_{n})}.
\end{eqnarray}
\end{proposition}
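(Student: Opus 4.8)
The plan is to prove both identities by induction on $n$, transcribing the first-jump decomposition used in the proofs of Propositions~\ref{prop6.1} and~\ref{prop.rho}. The base cases $n=1$ and $n=2$ have already been computed explicitly in the displays preceding the statement, both for $v_n^J$ and for $\mathcal{T}_{n,0}$, so only the inductive step remains.

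For the inductive step I would fix $L_n<x<L_{n-1}$ and condition on the epoch of the first Poisson jump, the exponential clock $\mathcal{E}_{1,\lambda}$. For the event defining $v_n^J$ the surplus, driven by the Brownian motion $X$ alone, must avoid both $d_n^X$ and $u_{n-1}^X$ until that clock rings; writing $y$ for the position of $X$ at the jump, the resolvent density \eqref{eq.uq} supplies the (defective) density of $y$ on $(L_n,L_{n-1})$. Immediately after the jump the surplus sits at $(a+1)y$, which lies in $(L_{n-1},L_{n-2})$ since $L_{n-1}=(a+1)L_n$, and from there the remaining discounted overflow is exactly $v_{n-1}^J((a+1)y)$. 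The strong Markov property then gives
\begin{equation*}
v_n^J(x)=\lambda\int_{L_n}^{L_{n-1}}u^{(q+\lambda)}_{L_n,L_{n-1}}(x,y)\,v_{n-1}^J\big((a+1)y\big)\,{\rm d}y,
\end{equation*}
which is the same recursion that drives \eqref{eq.rnk} and \eqref{eq.wnk}.

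Next I would insert the induction hypothesis for $v_{n-1}^J$ and split $u^{(q+\lambda)}_{L_n,L_{n-1}}(x,y)$ into its two summands from \eqref{eq.uq}. Every convolution appearing in $v_{n-1}^J$ is evaluated at $(a+1)y-L_{n-1}=(a+1)(y-L_n)$, so after the change of variable $z=y-L_n$ the computation reduces to the elementary integrals recorded in the proof of Proposition~\ref{prop6.1}, namely \eqref{convo} and its higher-order analogues obtained from the scaling rule for convolutions. The summand carrying $-\W(x-y)$ raises the convolution depth by one and, under the dilation $y\mapsto(a+1)y$ combined with $L_{n-1}=(a+1)L_n$, shifts each $\W_{(a+1)^i}$ to $\W_{(a+1)^{i+1}}$ and the terminal $\mathcal{Q}_{(a+1)^{n-1}}$ to $\mathcal{Q}_{(a+1)^n}$; matching coefficients yields $A_{n,f,1}=\lambda A_{n-1,f,0}$ and $A_{n,f,j}=\lambda(a+1)A_{n-1,f,j-1}$ for $2\le j\le n$. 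The summand proportional to $\W(x-L_n)$ contributes a single $\W(x-L_n)$ term whose coefficient $A_{n,f,0}$ is read off by evaluating the remaining integral at the upper endpoint $L_{n-1}$, which reproduces exactly the stated closed form for $A_{n,f,0}$ and closes the induction for $v_n^J$.

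The identity for $\mathcal{T}_{n,0}$ follows from the identical induction, the only change being the reward carried at the terminal up-crossing of $L_0$: replacing the overflow $(a+1)U(\mathcal{S}_n)-L_0$ by the constant $1$ (cf.\ the passage from \eqref{5.27} to \eqref{eq.at2}) turns the final integral $\int\W(x-y)\,{\rm d}y$ into its antiderivative, so that wherever $v_n^J$ terminates in $\mathcal{Q}_{(a+1)^n}$ the quantity $\mathcal{T}_{n,0}$ terminates in $\overline{W}^{(q+\lambda)}_{(a+1)^n}$, with the structurally identical recursion for the coefficients $A_{n,\mathcal{T},\cdot}$. I expect the only genuine work — and the main source of error — to be the convolution bookkeeping: keeping straight how the post-jump dilation generates the $(a+1)$ factors and the index shifts in the nested convolutions, and checking that the boundary evaluation at $L_{n-1}$ reproduces the stated formulas for $A_{n,f,0}$ and $A_{n,\mathcal{T},0}$. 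Everything else is a direct transcription of Propositions~\ref{prop6.1} and~\ref{prop.rho}.
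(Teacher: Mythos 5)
Your proposal is correct and follows essentially the same route as the paper, which itself only states that the result is obtained ``using similar arguments'' to Proposition~\ref{prop.rho}: the one-jump recursion $v_n^J(x)=\lambda\int_{L_n}^{L_{n-1}}u^{(q+\lambda)}_{L_n,L_{n-1}}(x,y)\,v_{n-1}^J((a+1)y)\,{\rm d}y$ via the resolvent density, the dilation identity $L_{n-1}=(a+1)L_n$, the convolution bookkeeping from the proof of Proposition~\ref{prop6.1}, and the replacement of the terminal overflow by the constant $1$ (turning $\mathcal{Q}_{(a+1)^n}$ into $\overline{W}^{(q+\lambda)}_{(a+1)^n}$) are exactly the intended argument, with the base cases $n=1,2$ already worked out in the displays preceding the proposition. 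Your write-up is in fact more explicit than the paper's.
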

\noindent
Recall that $v_N(x)$ is the expected discounted dividends starting at state $x$ until reaching $L_N$, and (cf.\ (\ref{vndef})) $v_n = v_N(L_n)$.
Observe that for $L_{n}<x<L_{n-1}$ we have
\begin{eqnarray}
&&v_N (x)=\sum_{k=0}^{n-1}r_{n,n-k}(x)v_{n-k}+\sum_{k=1}^{n-1}\omega_{n,n-k}(x)v_{n-k}\nonumber\\&&
\qquad +(\omega_{n,0}(x)+\mathcal{T}_{n,0})v_{0} + v_n^J(x).\label{secondkeyidentity}
\end{eqnarray}
The first two terms in the righthand side of (\ref{secondkeyidentity})
correspond to cases in which a level from $\mathcal{N}$ is reached before $L_0=b$ is reached or up-crossed.
The $v_0$ term covers the two cases in which level $L_0$ is reached
($\omega_{n,0}(x)$ is the expected discounted time to  reach level
		$L_{0}$ before reaching any other level in $\mathcal{N}$) and level $L_0$ is up-crossed by a jump ($\mathcal{T}_{n,0}(x)$ is the expected discounted time until up-crossing $L_{0}$ before reaching any other level in $\mathcal{N}$).
Finally $v_n^J(x)$ is the expected discounted overflow above $L_0$ by a jump, when it occurs before reaching any level in $\mathcal{N}$.

We now derive a system of equations identifying all $v_n$.
Clearly $v_{N}=0$. Let us set an equation for $v_{0}$.
Assume that $U(0)=b=L_{0}$.
Let $\overline{X}(t):=\sup_{ 0\leq s\leq  t}\{X(s)\}$, $V(t):=(\overline{X}(t)-b)_{+}$
and let $R(t)=X(t)-V(t)$, where $y_{+}=\max(y,0)$. Observe that $V(t)$ is the cumulative amount of
dividends obtained up to time $t$ only via process $X(t)$.
From Theorem 8.11 in Kyprianou \cite{kyprianou2006} we have, with $d^R_ {\alpha} = {\rm min}\{t: R(t) = \alpha\}$:
\begin{equation}\label{eq.muq}
\frac{\prob_{x}(R(\mathcal{E}_{q})\in (y,y+{\rm d}y), \mathcal{E}_{q}< d^{R}_{\alpha})} {q{\rm d}y}=\mu^{(q)}(x,y)=\frac{W^{(q)}(x-\alpha)}{W^{(q)'
	}(b - \alpha)}W^{(q)'}(b-y)-W^{(q)}(x-y),
\end{equation}
where $W^{(q)'}(x)$ is the derivative of $W^{(q)}(x)$ with respect to $x$.
Moreover, from \cite{APP} we know that the expected discounted dividends paid until $d_{L_1}^{R} \wedge \mathcal{E}_{\lambda}$ starting at $b$
equals
\begin{equation} \eta\left(b,\frac{b}{a+1}\right):=\E_{b}\left[\int_{0}^{\infty}e^{-qt}1_{t< d_{L_1}^R \wedge\mathcal{E}_{\lambda}}{\rm d}V(t)\right]=\frac{W^{(q+\lambda)}(b-\frac{b}{a+1})}{W^{(q+\lambda)'}(b-\frac{b}{a+1})}.
	\end{equation}
Additionally, from Theorem 8.10(i) in Kyprianou \cite{kyprianou2006} with $\theta=0$ we have
\begin{equation}\label{reflectedexitLT}
\E_x\left[e^{-q d^R_\alpha}1_{d^R_{\alpha}<\mathcal{E}_{\lambda}}\right]=
Z^{(q+\lambda)}(x-\alpha)-(q+\lambda)\frac{\W(b-\alpha)}{W^{(q+\lambda)'}(b-\alpha)}\W(x-\alpha).
\end{equation}
Therefore:
	 \begin{eqnarray}
	 &&v_{0}=	\eta\left(b,\frac{b}{a+1}\right)+\lambda\int_{L_{1}}^{L_{0}}\mu^{(q+\lambda)}(b,y)((a+1)y-b+v_{0}){\rm d}y\nonumber\\
	 	&&\qquad+\left(Z^{(q+\lambda)}(L_{0}-L_{1})-(q+\lambda)\frac{\W(L_{0}-L_{1})}{W^{(q+\lambda)'}(L_{0}-L_{1})}\W(L_{0}-L_{1})\right)v_{1}.\label{vzero}
	 \end{eqnarray}
The second term is the expected discounted dividends due to a jump that occurs at time $\mathcal{E}_{\lambda}$ before down-crossing $L_{1}$.
The last term
equals $\E_{L_0}\left[e^{-q d^R_{L_1}}1_{d^R_{L_1}<\mathcal{E}_{\lambda}}\right]v_1$ and hence
is the expected discounted dividends when $L_{1}$ is down-crossed before the exponential time $\mathcal{E}_{\lambda}$  has expired.
Further, we have
\begin{eqnarray}
&&v_{1}=
\left(Z^{(q+\lambda)}(L_{1}-L_{2})-\frac{\W (L_{1}-L_{2})}{\W(L_{0}-L_{2})}Z^{(q+\lambda)}(L_{0}-L_{2})\right)v_{2}\label{v12}\\
&&\qquad+\left( \lambda\int_{L_{2}}^{L_{1}}u^{(q+\lambda)}_{L_{2},L_{0}}(L_{1},y) r_{1,1}((a+1)y){\rm d}y\,\right) v_{1}\label{v11}\\
&&\qquad+\left(\frac{\W(L_{1}-L_{2})}{\W(L_{0}-L_{2})}\label{v10a}\right.\label{v10W}\\
&&+
\lambda\left.\int_{L_{2}}^{L_{1
}}u^{(q+\lambda)}_{L_{2},L_{0}}(L_{1},y)\left(\mathcal{T}_{1,0}((a+1)y) +\omega_{1,0}((a+1)y)\right){\rm d}y\right)\,v_{0}
\label{v10}\\
&&\qquad+\lambda\int_{L_{1}}^{L_{0}}u^{(q+\lambda)}_{L_{2},L_{0}}(L_{1},y)((a+1)y-b){\rm d}y\label{v10d}\\
&&\qquad+\lambda\int_{L_{2}}^{L_{1}}u^{(q+\lambda)}_{L_{2},L_{0}}(L_{1},y)v_{1}^{J}((a+1)y
){\rm d}y . \label{v10d1}
\end{eqnarray}
The term in the parentheses in (\ref{v12}) is the expected discounted time to reach $L_{2}$ before a jump and before reaching $L_{0}$.
The term that multiplies $v_{1}$ in (\ref{v11}) is the expected discounted time to reach $L_{1}$ before any other level in $\mathcal{N}$ is reached. (\ref{v10W}) is the expected discounted time to reach $L_{0}$ by the Brownian motion before down-crossing $L_{2}$ and before the exponential time has expired. The first term in (\ref{v10}) is the expected discounted time to jump above $b$ when this jump occurs before the exponential time has expired and the second term is the expected discounted time to reach $b$ by the Brownian motion.
(\ref{v10d}) is the expected discounted dividends due to a jump when the exponential time has expired while the process is in $(L_{1},L_{0})$ and (\ref{v10d1}) is the expected discounted dividends due to a jump
when the exponential time has expired while the process is in $(L_2,L_1)$. Similarly, we can observe that
\begin{eqnarray}
&&v_{2}=
\left(Z^{(q+\lambda)}(L_{2}-L_{3})-\frac{\W (L_{2}-L_{3})}{\W(L_{1}-L_{3})}Z^{(q+\lambda)}(L_{1}-L_{3})\right)v_{3}\label{v2d}\\
&&\qquad+\left( \lambda\int_{L_{3}}^{L_{2}}u^{(q+\lambda)}_{L_{3},L_{1}}(L_{2},y)r_{2,2}((a+1)y){\rm d}y\, \right) v_{2}\label{v22}\\
&&\qquad+\left(\frac{\W(L_{2}-L_{3})}{\W(L_{1}-L_{3})}+\lambda\int_{L_{3}}^{L_{2}}u^{(q+\lambda)}_{L_{3},L_{1}}(L_{2},y)(r_{2,1}((a+1)y)+\omega_{2,1}((a+1)y)){\rm d}y \right. \label{v21a}\\
&&\qquad+\left.\lambda\int_{L_{2}}^{L_{1}}u^{(q+\lambda)}_{L_{3},L_{1}}(L_{2},y)r_{1,1}((a+1)y){\rm d}y\right) v_{1}  \label{v21}\\
&&\qquad+\left(\lambda\int_{L_{3}}^{L_{2}}u^{(q+\lambda)}_{L_{3},L_{1}}(L_{2},y)(\mathcal{T}_{2,0}((a+1)y)+\omega_{2,0}((a+1)y)){\rm d}y \right. \label{v20a}\\
&&\qquad+\left.\lambda\int_{L_{2}}^{L_{1}}u^{(q+\lambda)}_{L_{3},L_{1}}(L_{2},y)(\mathcal{T}_{1,0}((a+1)y)+\omega_{1,0}(a+1)y)){\rm d}y\right)v_{0}\label{v20}\\
&&\qquad+\lambda\left(\int_{L_{3}}^{L_{2}}u^{(q+\lambda)}_{L_{3},L_{1}}(L_{2},y)v_{2}^{J}((a+1)y){\rm d}y+\int_{L_{2}}^{L_{1}}u^{(q+\lambda)}_{L_{3},L_{1}}(L_{2},y)v_{1}^{J}((a+1)y){\rm d}y\right).\label{v2f}
\end{eqnarray}
Indeed, (\ref{v2d}) is the expected discounted dividends when level $L_{3}$ is reached before any other level in $\mathcal{N}$ and before a jump. (\ref{v22}) is the expected discounted dividends when a jump occurs before reaching $L_{1}$ or $L_{3}$, and just before the jump $U$ is between $L_{2}$ and $L_{3}$. Similarly, (\ref{v21a}) and (\ref{v21}) are the expected discounted dividends when a jump occurs before reaching $L_{1}$ or $L_{3}$ and after this jump the first level that is reached is $L_{1}$. Additionally, (\ref{v20a}) and (\ref{v20}) are the expected discounted dividends when a jump occurs before reaching $L_{1}$ or $L_{3}$ and after this jump the first level that is reached is $L_{0}$.
Moreover, (\ref{v2f}) is the expected discounted dividend due to overflow  above $b$ when a jump occurs before reaching $L_{1}$ or $L_{3}$ and after this jump the first level that is reached is $L_{0}=b$ due to dividends payment after up-crossing $b$ by a jump.
Using similar arguments we can conclude that for $2\leq n\leq N-1$ we have:
\begin{eqnarray}\label{vnequations}
\lefteqn{v_{n}=\left(Z^{(q+\lambda)}(L_{n}-L_{n+1})-\frac{\W (L_{n}-L_{n+1})}{\W(L_{n-1}-L_{n+1})}Z^{(q+\lambda)}(L_{n-1}-L_{n+1})\right)v_{n+1}}\label{vn1}\\
&&+\left( \lambda\int_{L_{n+1}}^{L_{n}}u^{q+\lambda)}_{L_{n+1},L_{n-1}}(L_{n},y)r_{n,n}((a+1)y){\rm d}y\, \right) v_{n}\label{vn2}\\
&&+\left(\lambda \int_{L_{n+1}}^{L_{n}}u^{(q+\lambda)}_{L_{n+1},L_{n-1}}(L_{n},y))(r_{n,n-1}((a+1)y)+\omega_{n,n-1}((a+1)y)){\rm d}y\right.\label{vn3}\\
&&+\frac{\W(L_{n}-L_{n+1})}{\W(L_{n-1}-L_{n+1})}
+\left.\lambda\int_{L_{n}}^{L_{n-1}}u^{(q+\lambda)}_{L_{n+1},L_{n-1}}(L_{n},y)r_{n-1,n-1}((a+1)y){\rm d}y\right)\,v_{n-1}\label{vn4}\\
&&+\lambda\sum_{k=2}^{n-1}\left(\int_{L_{n+1}}^{L_{n}}u^{(q+\lambda)}_{L_{n+1},L_{n-1}}(L_{n},y)(r_{n,n-k}((a+1)y)+\omega_{n,n-k}((a+1)y)){\rm d}y\right.\label{vn5}\\
&&\left.+\int_{L_{n}}^{L_{n-1}}u^{(q+\lambda)}_{L_{n+1},L_{n-1}}(L_{n},y)(r_{n-1,n-1-(k-1)}((a+1)y)+\omega_{n-1,n-1-(k-1)}((a+1)y)){\rm d}y\right)v_{n-k}\nonumber\\\label{vn6}\\
&&+\lambda\left( \int_{L_{n+1}}^{L_{n}}u^{(q+\lambda)}_{L_{n+1},L_{n-1}}(L_{n},y)\omega_{n,0}((a+1)y){\rm d}y+\int_{L_{n}}^{L_{n-1}}u^{(q+\lambda)}_{L_{n+1},L_{n-1}}(L_{n},y)\omega_{n-1,0}((a+1)y){\rm d}y\right.\nonumber\\
\label{vn7}\\
&&+\left.\int_{L_{n+1}}^{L_{n}}u^{(q+\lambda)}_{L_{n+1},L_{n-1}}(L_{n},y)\mathcal{T}_{n,0}((a+1)y){\rm d}y+\int_{L_{n}}^{L_{n-1}}u^{(q+\lambda)}_{L_{n+1},L_{n-1}}(L_{n},y)\mathcal{T}_{n-1,0}((a+1)y){\rm d}y\right)v_{0}\nonumber\\ \label{vn8}\\
&&+\lambda\left(\int_{L_{n+1}}^{L_{n}}u^{(q+\lambda)}_{L_{n+1},L_{n-1}}(L_{n},y)v^{J}_{n}((a+1)y){\rm d}y+\int_{L_{n}}^{L_{n-1}}u^{(q+\lambda)}_{L_{n+1},L_{n-1}}(L_{n},y)v^{J}_{n-1}((a+1)y){\rm d}y\right).\nonumber\\
\label{vn9}
\end{eqnarray}
(\ref{vn1})-(\ref{vn6}) are obtained by the same arguments  as those leading to (\ref{systemforrho}). (\ref{vn7})-(\ref{vn8}) describes the expected discounted time  to reach $L_{0}$ by the Brownian motion when it is the first level reached in $\mathcal{N}$. In (\ref{vn7}) level $L_{0}$ is reached by the Brownian motion and in (\ref{vn8}) it is reached immediately after a jump above $L_{0}$. Finally,  (\ref{vn9}) describes the expected discounted dividends paid due to up-crossing of $L_{0}$ when it occurs before any other level in $\mathcal{N}$  has been reached.
Finally, notice that
\begin{equation}\label{lasteq}v_{N}=0. \end{equation}
To sum up, we have the following main result.
\begin{theorem}
The value function $v_N(x)$, defined formally in \eqref{vN}, is given by
\eqref{secondkeyidentity} with $r_{n,n-k}$,
$\omega_{n,n-k}$,
identified in Propositions \ref{prop6.1} and \ref{prop.rho}
and $v^{J}_{n}(x)$, $\mathcal{T}_{n,0}(x)$
in  Proposition \ref{vjn},
and
with $v_n$ solving the system of linear equations \eqref{vzero} - \eqref{lasteq}.
\end{theorem}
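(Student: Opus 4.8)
The plan is to obtain the value function by a first-passage decomposition over the grid $\mathcal{N}=\{L_1,\dots,L_N\}$, followed by the solution of a finite linear system for the grid values $v_n=v_N(L_n)$. Two structural features would be used throughout: between jumps the surplus coincides with the spectrally negative L\'evy process $X$, whose two-sided exit quantities are the scale-function expressions \eqref{eq.W}, \eqref{2} and \eqref{eq.uq}; and each jump multiplies the current level by $(1+a)$, so that a jump from the interval $(L_n,L_{n-1})$ lands in $(L_{n-1},L_{n-2})$, i.e.\ one interval closer to the barrier $b=L_0$. This multiplicative shift is precisely what turns the one-step relations into the index recursions recorded in the propositions.

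First I would establish the decomposition \eqref{secondkeyidentity}. Fix $x\in(L_n,L_{n-1})$ and consider the first epoch at which the controlled process either hits a level of $\mathcal{N}$ or crosses $b$. Since $X$ has no positive jumps, this first event is exactly one of the mutually exclusive possibilities already enumerated: a down-crossing of some $L_{n-k}$ (transform $r_{n,n-k}(x)$, $k=0,\dots,n-1$); a Brownian up-crossing of some $L_{n-k}$ for $k=1,\dots,n-1$ (transform $\omega_{n,n-k}(x)$); or an arrival at $L_0=b$, reached either continuously by the Brownian motion ($\omega_{n,0}$) or up-crossed by a jump ($\mathcal{T}_{n,0}$, the latter paying at once the overflow $v_n^J(x)$). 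By the strong Markov property the continuation after reaching any $L_m$ contributes $v_m$ discounted; in particular both ways of arriving at $b$ restart the accounting at $v_0$, while the overshoot pays its overflow immediately. Collecting these discounted transforms yields \eqref{secondkeyidentity}.

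Next I would justify the coefficient functions. Conditioning on the first Poisson epoch and using the $(q+\lambda)$-resolvent density $u^{(q+\lambda)}_{L_{n},L_{n-1}}(x,y)$ of $X$ on $(L_n,L_{n-1})$ from \eqref{eq.uq}, one obtains one-step integral recursions: a jump at position $y$ relaunches the process at $(1+a)y$ in the next interval, so $r_{n,n-k}$ is expressed through $r_{n-1,n-k}$, and analogously for $\omega_{n,n-k}$ and $v_n^J$. Carrying out the resulting convolutions, in which the multiplicative jumps produce the rescaled scale functions $\W_{(a+1)^i}$ (cf.\ \eqref{convo}), gives exactly the closed recursive forms of Propositions \ref{prop6.1}, \ref{prop.rho} and \ref{vjn}; the base cases $r_{n,n}=\xi_n$ (see \eqref{rnnx}), \eqref{Omegan-1}, and the $v_1^J$, $\mathcal{T}_{1,0}$ evaluations follow directly from \eqref{2}, \eqref{eq.W} and \eqref{eq.uq}.

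Finally I would set up and solve the system for the $v_n$. Starting from a grid point $L_n$ and conditioning once more on the first of the events \{down-cross $L_{n+1}$, up-cross $L_{n-1}$, jump\}, then substituting \eqref{secondkeyidentity} for the post-jump continuation, produces the equations \eqref{vnequations} for $2\le n\le N-1$; the endpoint is $v_N=0$ in \eqref{lasteq}, and the reflecting barrier at $b$ gives the special equation \eqref{vzero}, in which the dividend flow while $X$ is reflected at $b$ is handled through the reflected-process identities \eqref{eq.muq}, $\eta(b,b/(a+1))$ and \eqref{reflectedexitLT}. I expect the main obstacle to be twofold: the careful bookkeeping needed to verify that the integral terms in \eqref{vnequations} are both exhaustive and disjoint --- in particular distinguishing whether, just before the decisive jump, the surplus sits in $(L_{n+1},L_n)$ or in $(L_n,L_{n-1})$, and which level of $\mathcal{N}$ is first reached afterwards --- and the correct accounting of dividends accrued at the reflecting level $b$, which is exactly where the fluctuation theory of the reflected spectrally negative process is indispensable. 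Once the coefficients are in place, \eqref{vzero}--\eqref{lasteq} is a finite nonsingular linear system, whose unique solution $(v_0,\dots,v_{N-1})$, inserted into \eqref{secondkeyidentity}, determines $v_N(x)$ for every $x\in(0,b]$.
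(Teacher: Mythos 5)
Your proposal follows essentially the same route as the paper: the first-passage decomposition over the grid $\mathcal{N}$ giving \eqref{secondkeyidentity}, the recursive derivation of $r_{n,n-k}$, $\omega_{n,n-k}$, $v_n^J$ and $\mathcal{T}_{n,0}$ by conditioning on the first Poisson epoch via the resolvent density \eqref{eq.uq}, and the linear system for the $v_n$ obtained by conditioning at grid points, with the reflected-process identities handling the barrier at $b$. This matches the paper's argument in all essential respects.
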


\section{Suggestions for further research}
\label{suggest}
The present study might serve as a first step towards the analysis of more general classes of insurance models related with proportional gains.
Below we suggest a few topics for further research.
\\
(i) One could consider more general jumps up from level $u$, possibly of the form $u + \zeta(u) + C_i$,
where $\zeta(u)$ is a subordinator.
\\
(ii) In Sections~\ref{sec5} and \ref{sec:Brownian} we have considered proportional growth at jump epochs, assuming that $C_i \equiv 0$.
It would be interesting to remove the latter assumption.
\\
(iii) Another interesting research topic is an exact analysis
of the value function $v(x)$ defined in \eqref{vpi}, without taking recourse to the approximation approach with levels $L_0,\dots,L_N$.
One would then have to solve the differential-delay equation (\ref{eqeq}).

\noindent
{\bf Acknowledgment}.
The authors are grateful to Eurandom (Eindhoven, The Netherlands) for organizing the {\it Multidimensional Queues, Risk and Finance Workshop},
where this project started.


\begin{thebibliography}{9}
\bibitem{Albrecher} Albrecher, H., Badescu, A.L. and Landriault, D. (2008). On the dual risk model with tax payments.
{\it Insurance Mathematics and Economics}, 42(3), 1086--1094.


\bibitem{Afonso}
Afonso, L.B., Cardoso R.M.R. and dos Reis, E. (2013).  Dividend problems in the dual risk model.
{\it Insurance Mathematics and Economics}, 53, 906--918.

		\bibitem{avanzi2007}Avanzi, B., Gerber, H.U. and Shiu, E.S.W. (2007). Optimal dividends in the dual risk model. \textit{Insurance Mathematics and Economics}, 41, 111--123.
		\bibitem{avanzi2008}Avanzi, B., Gerber, H.U. (2008). Optimal dividends in the dual risk model with diffusion. \textit{Astin Bulletin}, 38, 653--667.
		\bibitem{avanzi2009}Avanzi, B. (2009). Strategies for dividend distribution: A review. \textit{North American Actuarial Journal}, 13, 217--251.
		\bibitem{avanzi2016}Avanzi, B., P\'{e}rez, J. L., Wong, B. and Yamazaki, K. (2017). On optimal joint reflective and refractive dividend strategies in spectrally positive  L\'{e}vy models.  \textit{Insurance Mathematics and Economics}, 72, 148--162.
\bibitem{APP}
Avram, F., Palmowski, Z. and Pistorius, M. (2007).
On the optimal dividend problem for a spectrally negative L\'evy process.
\textit{Annals of Applied Probability}, 17(1), 156--180.
	
			\bibitem{bayraktar2012}Bayraktar, E., Kyprianou, A. and Yamazaki, K. (2014). On the optimal dividends in the dual model. \textit{Astin Bulletin}, 43(3), 359-372.
\bibitem{BF}Boxma, O.J., Frostig, E. (2018). The dual risk model with dividends taken at arrival. \textit{Insurance Mathematics and Economics}, 83, 83-92.
\bibitem{BLM} Boxma, O.J., L\"opker, A. and Mandjes, M.R.H. (2020). On two classes of reflected autoregressive processes.  \textit{J. Applied Probability}, 57, 657--678.
\bibitem{BLMP} Boxma, O.J., L\"opker, A., Mandjes, M.R.H. and Palmowski, Z. (2020). A multiplicative version of the Lindley recursion. \textit{Eurandom Report 2020-004}; submitted for publication.
\bibitem{BMR} Boxma, O.J., Mandjes, M.R.H. and Reed, J. (2016). On a class of reflected AR(1) processes. \textit{J. Applied Probability}, 53, 816-832.
\bibitem{Dareketal}
Buraczewski, D., Damek, E. and Mikosch, T. (2016).
\textit{Stochastic Models with Power-Law Tails:
The Equation $X = AX + B$.} Springer.

		On the dual risk model with Parisian implementation delays in dividend payments.
		\textit{Eur. J. Oper. Res.}, 257, 159--173.
\bibitem{Cohen}
Cohen, J.W. (1982). {\it The Single Server Queue}. North-Holland, Amsterdam.

		
	
\bibitem{Hale} Hale, J.K. and Verduyn Lunel, S.M. (1993). \textit{Introduction to Functional Differential Equations}. Springer, Berlin.
\bibitem{kkr2013} Kuznetsov, A., Kyprianou, A.E. and Rivero, V. (2013).
The theory of scale functions for spectrally negative L\'{e}vy processes. In {\it L\'{e}vy Matters II}, 97-186, Springer, Berlin.
	\bibitem{kyprianou2006} Kyprianou, A.E. (2006).  \textit{Introductory Lectures on Fluctuations of L\'{e}vy processes with Applications}. Springer, Berlin.

\bibitem{MarPal} Marciniak, E. and Palmowski, Z. (2018).
On the optimal dividend problem in the dual models with surplus-dependent premiums.
{\it Journal of Optimization Theory and Applications}, 179(2), 533--552.

		
		\bibitem{Ng}Ng, A. (2009). On the dual model with a dividend threshold. \textit{Insurance Mathematics and Economics}, 44, 315--324.

\bibitem{Paletal}
Palmowski, Z., Ramsden, L. and Papaioannou, A.D. (2018).
Parisian ruin for the dual risk process in discrete-time.
{\it European Actuarial Journal}, 8(1), 197--214.
\bibitem{Prahbu}
Prabhu, N.U. (1998). {\it Stochastic Storage Processes}. Springer-Verlag, New York.
\bibitem{Ross}
Ross, S. (2209) \textit{Introduction to Probability Models.} 10th ed., Academic Press, New York.

\bibitem{Vlasiou} Vlasiou, M. (2006). \textit{Lindley-Type Recursions.} PhD thesis, Eindhoven University of Technology.
		\bibitem{yinwen2013}Yin, C., Wen, Y. (2013). Optimal dividend problem with terminal value for spectrally positive L\'{e}vy processes. \textit{Insurance Mathematics and Economics}, 53(3), 769--773.
		\bibitem{yinwen2014}Yin, C., Wen, Y. and Zhao, Y. (2014). On the dividend problem for a spectrally positive L\'{e}vy process. \textit{Astin Bulletin}, 44(3), 635--651.
	\end{thebibliography}
\end{document}